\theoremstyle{plain}
\newtheorem{thm}{Theorem}[section]
\newtheorem{lem}[thm]{Lemma}
\newtheorem{prop}[thm]{Proposition}
\theoremstyle{definition}
\newtheorem{dfn}[thm]{Definition}
\newtheorem{ex}[thm]{Example}
\theoremstyle{remark}
\newtheorem{rmk}[thm]{Remark}
\newcommand{\cO}{\mathcal{O}}
\newcommand{\veps}{\varepsilon}
\DeclareMathOperator{\uhp}{\mathcal{H}}
\DeclareMathOperator{\Tr}{Tr}
\DeclareMathOperator{\Hom}{Hom}
\DeclareMathOperator{\GL}{GL}
\DeclareMathOperator{\SL}{SL}
\DeclareMathOperator{\Res}{Res}
\DeclareMathOperator{\diag}{diag}
\newcommand*{\df}{\mathrel{\vcenter{\baselineskip0.5ex \lineskiplimit0pt
                     \hbox{\scriptsize.}\hbox{\scriptsize.}}} =}
\providecommand{\norm}[1]{\left\lVert#1\right\rVert}
\providecommand{\abs}[1]{\left\lvert#1\right\rvert}
\providecommand{\twomat}[4]{\left(\begin{matrix}#1&#2\\#3&#4\end{matrix}\right)}
\providecommand{\stwomat}[4]{\left(\begin{smallmatrix}#1&#2\\#3&#4\end{smallmatrix}\right)}
\newcommand{\CC}{\mathbf{C}}
\newcommand{\ZZ}{\mathbf{Z}}
\newcommand{\RR}{\mathbf{R}}
\renewcommand{\Im}{\operatorname{Im}}
\renewcommand{\Re}{\operatorname{Re}}
\DeclareMathOperator{\Herm}{Herm}
\DeclareMathOperator{\Pos}{Pos}
\DeclareMathOperator{\be}{\mathbf{e}}
\DeclareMathOperator{\Kl}{Kl}
\begin{document}
%Title
\title{Eisenstein metrics}
\author{Cameron Franc}
\address{McMaster University}
\email{franc@math.mcmaster.ca}
\thanks{The author gratefully acknowledges financial support received from NSERC through a Discovery Grant.}

\date{}

\begin{abstract}
  We study families of metrics on automorphic vector bundles associated to representations of the modular group. These metrics are defined using an Eisenstein series construction. We show that in certain cases, the residue of these Eisenstein metrics at their rightmost pole is a harmonic metric for the underlying representation of the modular group.  The last section of the paper considers the case of a family of representations that are indecomposable but not irreducible. The analysis of the corresponding Eisenstein metrics, and the location of their rightmost pole, is an open question whose resolution depends on the asymptotics of matrix-valued Kloosterman sums.
\end{abstract}
\maketitle
\tableofcontents

\section{Introduction}
This paper concerns results on functions on the complex upper-half plane that take values in hermitian matrices and which satisfy automorphic transformation laws. These functions are uniformizations of metrics on automorphic vector bundles, and the motivation for their study stems from modern Hodge theory. Nonabelian Hodge correspondences describe equivalences between categories of stable connections and categories of Higgs bundles on an underlying base manifold. Such highly nontrivial correspondences have found use throughout geometry, topology, physics and even in number theory, beginning primarily with Ngo's proof of the fundamental lemma \cite{Ngo} using properties of the Hitchin integrable system \cite{HitchinDuke}, \cite{Hitchin} associated with moduli spaces of Higgs bundles. In this paper we introduce hermitian matrix valued Eisenstein series and study to what extent harmonic metrics realizing the nonabelian Hodge correspondence could be described as residues of such Eisenstein series. This program is carried out fully for the two-dimensional inclusion representation of the modular group, and some general results and difficulties are studied when the monodromy at the cusp is unitary. Before getting to the details we shall provide background and motivation, as well as a summary of our results.

The nonabelian Hodge correspondence traces back to work of Narasimhan-Seshadri \cite{NarasimhanSeshadri} in the compact case, and Mehta-Seshadri \cite{MehtaSeshadri} in the noncompact setting. These works established correspondences between categories of unitary representations of fundamental groups and categories of holomorphic vector-bundles on a base curve. Later authors, beginning with work of Hitchin \cite{HitchinDuke}, \cite{Hitchin}, Donaldson \cite{Donaldson}, Corlette \cite{Corlette} and Simpson \cite{Simpson1}, \cite{Simpson2}, extended this to encompass all irreducible representations of the fundamental group, by enhancing vector bundles $E/X$ with the additional structure of a \emph{Higgs field}, which is an $\cO_X$-linear map
\[
  \theta \colon E \to E\otimes \Omega^1_X
\]
satisfying $\theta^2=0$, a condition which is automatic for curves.

One reason why nonabelian Hodge correspondences have proven so useful is that they can be used to replace nonlinear objects --- regular connections --- with $\cO_X$-linear Higgs fields. For example, recently in joint work with Steven Rayan, we used this strategy in \cite{FrancRayan} to establish new instances of inequalities governing the multiplicities among the line bundles that arise in decompositions of vector-bundles associated with vector-valued modular forms. In past joint work with Geoffrey Mason \cite{FrancMason1},  we established instances of such inequalities by proving the existence of semi-canonical forms for the modular derivative $D_k = q\frac{d}{dq} -\tfrac{k}{12}E_2$ acting on spaces of modular forms of weight $k$. The nonlinear nature of these operators proves to be a nontrivial obstacle in such arguments.

Unfortunately, at the heart of establishing nonabelian Hodge correspondences lies the problem of solving a nonlinear partial differential equation, the solution of which yields the existence of \emph{harmonic metrics} (cf. Definition \ref{d:harmonic}) on vector-bundles that can be used to associate Higgs bundles to regular connections, and vice-versa. For an overview of how such correspondences work, see \cite{FrancRayan}, \cite{RabosoRayan}, or \cite{GoldmanXia} for the rank-one case. Existence proofs for harmonic metrics can be executed using a heat-flow argument as in \cite{Donaldson}, but writing down explicit examples of harmonic metrics can be difficult except in special circumstances.

If the base manifold $X$ is the compactification of some quotient $\Gamma \backslash \uhp$ where $\uhp$ is the complex upper-half plane, and $\Gamma$ is a Fuchsian group, then vector-bundles on $X$ are pulled back to trivializable bundles on $\uhp$. Attendant structures on such vector-bundles, such as connections, Higgs fields, or metrics, can then be described as automorphic objects on $\uhp$, typically vector or matrix-valued, satisfying a prescribed transformation law under $\Gamma$. In this paper we explore the use of Eisenstein series for constructing metrics on automorphic vector-bundles, focusing on the case of $\Gamma = \SL_2(\ZZ)$, so that $X = Y \cup \{\infty\}$ where $Y = \Gamma \backslash \uhp$ is the moduli space of elliptic curves. We are primarily interested in representations that are not unitary, so that the corresponding harmonic metric is different from the Petersson inner-product. See \cite{Deitmar}, \cite{DeitmarMonheim1}, \cite{DeitmarMonheim2}, \cite{Muller} for recent work on analysis of automorphic forms transforming under non-unitary representations.

In Section \ref{s:eisenstein} we associate \emph{Eisenstein metrics} $H(\tau,s)$ to representations of $\Gamma$ generalizing constructions of \cite{KnoppMason2}, \cite{KnoppMason3}, \cite{KnoppMason4}, and prove their convergence when the real part of $s$ is large enough. We state our main convergence result, Proposition \ref{p:convergence}, in a form that is flexible enough to work for complex analytic families of representatations of $\Gamma$. Such families of Eisenstein series are examples of higher dimensional analogues of families studied in \cite{Bruggeman}. Following this, Section \ref{s:harmonic} then briefly recalls the definition of a harmonic metric.

In the standard theory of Eisenstein series one starts with a simple function satisfying a linear differential equation and averages to get a more interesting function satisfying a larger set of invariance properties. By linearity, the averaged function satisfies the same linear differential equation. If one instead hopes to solve a nonlinear differential equation, such averaging cannot be expected to solve nonlinear equations in the same way that one solves linear equations. As such, the following result may be somewhat surprising:

\begin{thm}
  \label{t:main}
  Let $\rho$ be the inclusion representation of $\SL_2(\ZZ)$, and let $H(\tau,s)$ be the corresponding Eisenstein metric. Then $H(\tau,s)$ admits analytic continuation to $\Re(s) > \frac 32$ with a simple pole at $s=2$ of residue equal to a tame harmonic metric for $\rho$.
\end{thm}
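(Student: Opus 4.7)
The plan is to reduce the analysis of $H(\tau,s)$ to classical theory of the real-analytic Eisenstein series, by observing that the entries of the matrix-valued series are differential operators applied to a single scalar series. First I would unfold the definition: for the inclusion representation, a natural $\Gamma_\infty$-invariant seed at the cusp is $y^s E$ with $E = \diag(0,1)$, yielding (up to a factor from the center $\pm I$)
\[
H(\tau,s) = \sum_{\substack{(c,d) \in \ZZ^2 \\ \gcd(c,d)=1}} \frac{y^s}{|c\tau+d|^{2s}} \twomat{c^2}{cd}{cd}{d^2}.
\]
Convergence for $\Re(s)>2$ is provided by Proposition \ref{p:convergence}, and multiplying by $\zeta(2s-2)$ (regular and nonzero at $s=2$) removes the coprimality constraint without affecting the pole analysis.

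Next I would realize each entry as a derivative of the scalar Eisenstein series $F(\tau,t) := \sum_{(c,d)\neq (0,0)}|c\tau+d|^{-2t}$. The identity
\[
\partial_\tau\partial_{\bar\tau}|c\tau+d|^{-2(s-1)} = (s-1)^2\, c^2\, |c\tau+d|^{-2s}
\]
gives the $(1,1)$-entry as $(s-1)^{-2}\partial_\tau\partial_{\bar\tau} F(\tau,s-1)$, and the companion identity $\partial_\tau|c\tau+d|^{-2(s-1)} = -(s-1)(c^2\bar\tau+cd)|c\tau+d|^{-2s}$ handles the $(1,2)$-entry. The $(2,2)$-entry can either be reduced analogously, or more efficiently extracted from the $(1,1)$-entry via the built-in equivariance $H(S\tau,s) = S H(\tau,s) S^{-1}$ for $S = \twomat{0}{-1}{1}{0}$, which swaps the diagonal entries.

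From Kronecker's limit formula, $y^s F(\tau,s)$ admits meromorphic continuation to $\CC$ with simple poles only at $s = 0, 1$ and Kronecker residue $\pi$ at $s = 1$ (a function constant in $\tau$). Transporting this through the differential reduction gives meromorphic continuation of $H(\tau,s)$ to $\Re(s) > 3/2$ with a simple pole only at $s = 2$; the shifted pole at $s = 1$ lies just outside this region. Since the residue operation commutes with the operators appearing, a short computation using $\partial_\tau\partial_{\bar\tau}(1/y) = 1/(2y^3)$ yields
\[
\Res_{s=2} H(\tau,s) = \frac{C}{y}\twomat{1}{-x}{-x}{x^2+y^2}
\]
for an explicit positive constant $C$.

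Finally I would identify this residue as a harmonic metric for $\rho$: the matrix on the right is the pullback to $\uhp$ of the $\SL_2(\RR)$-invariant metric on the symmetric space $\SL_2(\RR)/\SO(2)$, hence is $\SL_2(\RR)$-equivariant and a fortiori $\Gamma$-equivariant, satisfying $\rho(\gamma)^* H(\gamma\tau)\rho(\gamma) = H(\tau)$. It corresponds under the nonabelian Hodge correspondence to the totally geodesic map $\Gamma\backslash\uhp \to \SL_2(\CC)/\SU(2)$ induced by the inclusion $\Gamma \hookrightarrow \SL_2(\RR)$, so it is the canonical harmonic metric for $\rho$; tameness is immediate from the polynomial growth as $y \to \infty$. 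The main obstacle, and source of the author's stated surprise, is not the analytic continuation (inherited from the scalar theory) but the fact that a nonlinear harmonic condition is satisfied at all by an Eisenstein-averaged object; it succeeds here precisely because the matrix weight $\twomat{c^2}{cd}{cd}{d^2}$ together with the specific residue at $s=2$ conspire to promote $\Gamma$-equivariant averaging data to $\SL_2(\RR)$-invariant data, a rigidity one should not expect in general.
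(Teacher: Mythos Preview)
Your unfolding step is where the argument goes wrong. The paper's Eisenstein metric (Definition \ref{d:eisenstein}) is built from the seed $\be(-L\tau)^t h\,\overline{\be(-L\tau)}\,y^s$ with $h\in\Pos(\rho)$ positive definite; for the inclusion representation with $h=I$ and $L=\stwomat{n}{(2\pi i)^{-1}}{0}{n}$ this seed is $e^{-4\pi ny}\stwomat{1}{-\bar\tau}{-\tau}{1+|\tau|^2}y^s$, \emph{not} $\diag(0,1)y^s$. Your choice $E=\diag(0,1)$ is not even in $\Pos(\rho)$, and the resulting series is not the object the theorem is about. Concretely (take $n=0$ for clarity), the paper's computation shows
\[
H(\tau,s)=\bigl(\text{your series}\bigr)\;+\;y^{-1}E(\tau,s+1)\twomat{1}{-\bar\tau}{-\tau}{|\tau|^2},
\]
so you have dropped an entire piece coming from the $\be(-L\tau)$ factors. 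It so happens that this missing piece is holomorphic on $\Re(s)>0$ (the scalar Eisenstein series $E(\tau,s+1)$ has its pole at $s=0$), so the location of the rightmost pole and the residue at $s=2$ are unaffected; but you must start from the correct definition, isolate this extra term, and argue its regularity --- otherwise you have proved a theorem about a different series. For general $n$ there is in addition the exponential factor producing the sums over $k$ in equation \eqref{eq:inceis1}, which you also need to address (and which is the actual source of the restriction to $\Re(s)>\tfrac32$).

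Once the correct starting point is in place, your differential-operator reduction to the scalar Epstein/Eisenstein series is a genuinely different and more economical route than the paper's: the paper computes the full Fourier expansion of $H(\tau,s)$ via Poisson summation (Theorem \ref{t:incmain}) and reads off the pole from the $\zeta(2s-3)$ factor in the constant term, whereas you would get the pole and residue directly from $\Res_{s=1}F(\tau,s)=\pi/y$ and the identity $\partial_\tau\partial_{\bar\tau}F(\tau,s-1)=(s-1)^2\sum c^2|c\tau+d|^{-2s}$. Do note, however, that the residue of $F(\tau,s)$ itself at $s=1$ is $\pi/y$, not a $\tau$-constant; it is $y^sF(\tau,s)$ whose residue is constant, and since your derivatives act on $F$ rather than on $y^{s-1}F$, you should keep the $y$-powers outside the differentiation and track them carefully when transporting the residue.
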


See Theorem \ref{t:incmain} for a more precise statement of this result, which is proved by computing the Fourier coefficients of $H(\tau,s)$ using standard techniques from the theory of Eisenstein series.

In Section \ref{s:unitary} we consider $H(\tau,s)$ for representations where $\rho(T)$ is unitary, where $T =\stwomat 1101$ is the cuspidal stabilizer, and we work out an expression for the Fourier coefficients of $H(\tau,s)$. When $\rho$ is itself unitary, this expression shows that $H(\tau,s)$ has a simple pole at $s=1$ of residue equal to a scalar multiple of the identity matrix, which is the Petersson inner-product for unitary representations. Thus, the analogue of Theorem \ref{t:main} is true when $\rho(T)$ is unitary, except that the pole shifts left to $s=1$. The difference between the unitary case and the case of Theorem \ref{t:main} seems to be the nontrivial $(2\times 2)$-Jordan block in $\rho(T)$ from Theorem \ref{t:main}, whereas $\rho(T)$ is diagonalizable for unitary $\rho$.

It is unclear to this author to what extent one might expect to recover harmonic metrics as residues of Eisenstein metrics, and so to probe this question in Section \ref{s:indecomposable} we consider a family of non-unitarizable representations $\rho$ such that $\rho(T)$ is of finite order (hence unitarizable). Unfortunately, the difficulty in using the Fourier coefficient computations of Section \ref{s:unitary} in general rests in understanding some matrix-valued nonabelian Kloosterman sums and associated generating series. To describe these sums, if $\rho$ is a representation of $\Gamma$, $L$ is a matrix satisfying $\rho(T) = \be(L) \df e^{2\pi i L}$, and $h$ is a Hermitian positive-definite matrix satisfying
\begin{equation}
\label{eq:hinv}
  \rho(\pm T)^th\overline{\rho(\pm T)} = h,
\end{equation}
then the associated \emph{Kloosterman sums} are defined as
\[
  \Kl(\rho,L, c) = \sum_{\substack{d=1\\\gcd(c,d)=1}}^c \be(-L\tfrac dc)\rho\stwomat abcd^th \overline{\rho\stwomat abcd} \be(L\tfrac dc),
\]
where $a,b \in \ZZ$ are chosen so that $ad-bc=1$. The invariance property of equation \eqref{eq:hinv} ensures that $\Kl(\rho,L,c)$ is well-defined independent of this choice, and the exponentials in the definition of $\Kl(\rho,L,c)$ ensure that the summands only depend on the value of $d$ modulo $c$.

As is typical in the theory of Eisenstein series, understanding the analytic continuation of Eisenstein metrics $H(\tau,s)$ rests in coming to grips with the analytic properties of Kloosterman sum generating series
\[
  D(s) = \sum_{c\geq 1} \frac{\Kl(\rho,L,c)}{c^s}.
\]

In Section \ref{s:indecomposable} we analyze these sums for a family of representations and exponents $(\rho,L)$ arising from a group cocycle obtained by integrating $\eta^4$, where $\eta$ is the Dedekind eta function. This family of two-dimensional representations, studied in \cite{MarksMason}, contains a one-dimensional subrepresentation that does not split off as a direct summand for generic specializations of the family. We show in Proposition \ref{p:taylor} that the family of Kloosterman sums $\Kl(\rho,L,c)$ admit a sort of second-order Taylor expansion in the deformation parameters about the specialization where the family becomes decomposable. The second-order term in this Taylor expansion contains a sequence $a_c$ of positive integers, whose values are in Table \ref{t:values} on page \pageref{t:values} and plotted in Figure \ref{f:plots} on page \pageref{f:plots}. Determining the rightmost pole of $H(\tau,s)$ for this family of representations comes down to understanding the growth of this sequence $a_c$. For example, if one could prove that $a_c = O(\phi(c)\log(c))$, where $\phi$ is the Euler phi function, then the rightmost pole of $H(\tau,s)$ would occur at $s=1$ and the corresponding residue would be positive-definite. If instead $a_c = O(\phi(c) c^\veps)$ for some $\veps > 0$, then the rightmost pole of $H(\tau,s)$ would occur to the right of $s=1$ and the residue would not be positive definite, hence not a harmonic metric. As we have only computed the terms $a_c$ for $c\leq 5000$, we are not prepared to make a conjecture as to the expected growth of sequences such as $a_c$. A natural question is to ask whether the $\ell$-adic machinery developed in \cite{Katz} and subsequent work could be employed to study such asymptotic questions, and we plan to return to this in future work.

\begin{rmk}
  The Kloosterman sums above are special cases of more general matrix-valued Kloosterman sums
  \[
  \sum_{\substack{d=1\\ \gcd(c,d)=1}}^c \be(-L\tfrac ac)\rho\stwomat abcd\be(-L\tfrac dc)
\]
that appeared in \cite{KnoppMason2}, \cite{KnoppMason3}, \cite{KnoppMason4}. Notice that if $\rho$ is trivial then this yields classical Kloosterman sums. In the definition of $\Kl(\rho,L,c)$ above, the representation $\rho$ is replaced by its induced action on the space $\Herm_d$ of $(d\times d)$-Hermitian matrices.
\end{rmk}

\subsection{Notation and conventions}
\begin{enumerate}
\item[---] $\Gamma = \SL_2(\ZZ)$.
\item[---] $T =\stwomat 1101$, $S = \stwomat{0}{-1}{1}0$.
\item[---] $\uhp = \{x+iy\in\CC \mid y > 0\}$ is the complex upper half plane.
\item[---] $\be(M) = e^{2\pi i M}$ for complex matrices $M$.
\item[---] In this note all metrics are Hermitian and $\Herm_d$ denotes the space of $(d\times d)$ Hermitian matrices.
\item[---] Representations are complex and finite-dimensional.
\end{enumerate}

\section{Eisenstein metrics}
\label{s:eisenstein}
Let $\rho\colon \Gamma \to \GL_d(\CC)$ be a representation of $\Gamma = \SL_2(\ZZ)$. Let $\Herm_d$ denote the real vector-space of $d\times d$ Hermitian matrices, so that $M\in \Herm_d$ means that $\bar M = M^t$.

\begin{dfn}
  A \emph{metric} for $\rho$ is a smooth function $H\colon \uhp \to \Herm_d$ such that $H(\tau)$ is positive definite for all $\tau \in \uhp$, and such that
\begin{equation}
  \label{eq:trans1}
\rho(\gamma)^t  H(\gamma \tau)\overline{\rho(\gamma)} = H(\tau)
\end{equation}
holds for all $\gamma \in \Gamma$.
\end{dfn}

Such functions can be used to define analogues of Petersson inner products on spaces of vector-valued modular forms associated to $\rho$. For example, if $F$ satisfies $F(\gamma\tau)=\rho(\gamma)F(\tau)$ for all $\gamma \in\Gamma$, and similarly for $G$, then we can define an invariant scalar-valued form by the rule
\[
  \langle F,G\rangle_\tau \df F(\tau)^tH(\tau)\overline{G(\tau)}.
\]
Equation \eqref{eq:trans1} implies that $\langle F,G\rangle_{\gamma \tau} = \langle F,G\rangle_{\tau}$ for all $\gamma \in \Gamma$.

\begin{ex}
If $\rho$ is unitary then $H(\tau) = I_d$ defines the usual Petersson metric. More generally, if $M \in \Herm_d$ is a constant positive definite matrix that satisfies $M\cdot \gamma = M$ with respect to the right action $M\cdot \gamma = \rho(\gamma)^tM\overline{\rho(\gamma)}$ of $\Gamma$, then $H(\tau) = M$ is a metric for $\rho$.
\end{ex}

Let $h\colon \uhp \to \Herm_d$ be a positive definite and smooth function that satisfies equation \eqref{eq:trans1} for $\gamma=\pm T$. More precisely, we assume that
\begin{align}
\label{eq:trans2} \rho(T)^{t}h(\tau+1)\overline{\rho(T)} =& h(\tau),\\
  \label{eq:trans3} \rho(-I)^{t}h(\tau)\overline{\rho(-I)} =& h(\tau).
\end{align}
If $\rho(-I)$ is a scalar matrix, then we must have $\rho(-I) = \pm I_d$, and equation \eqref{eq:trans3} is satisfied. This occurs for example if $\rho$ is irreducible.

Given an $h$ as in the preceding paragraph, the corresponding Poincare series is defined as usual by the formula
\begin{equation}
  \label{eq:eisenstein1}
  P(\rho,h,\tau) \df \sum_{\gamma \in \langle \pm T\rangle\backslash \Gamma} \rho(\gamma)^{t}h(\gamma \tau)\overline{\rho(\gamma)}. 
\end{equation}
This is well-defined thanks to equations \eqref{eq:trans2} and \eqref{eq:trans3}. Furthermore, if $P$ converges absolutely then we have
\begin{align*}
  P(\rho,h,\alpha \tau) =&\sum_{\gamma \in \langle \pm T\rangle\backslash \Gamma} \rho(\gamma)^{t}h(\gamma \alpha\tau)\overline{\rho(\gamma)}\\
  =&\sum_{\gamma \in \langle \pm T\rangle\backslash \Gamma} \rho(\gamma\alpha^{-1})^{t}h(\gamma\tau)\overline{\rho(\gamma\alpha^{-1})}\\
  =& \rho(\alpha)^{-t}P(h,\tau)\overline{\rho(\alpha)}^{-1}
\end{align*}
for all $\alpha \in \Gamma$. Thus, after moving the $\rho$-terms to the other side, one sees that $P(h,\tau)$ satisfies equation \eqref{eq:trans1} as a function of $\tau$. If $h$ is chosen so that $P(\rho,h,\tau)$ converges absolutely to a smooth function, then the series $P(\rho,h,\tau)$ defines a metric for $\rho$.

Let $g \in \GL_d(\CC)$. Notice that if $h$ satisfies equations \eqref{eq:trans2} and \eqref{eq:trans3} for $\rho$, then $g^{-t}h\overline{g}^{-1}$ satisfies equations \eqref{eq:trans2} and \eqref{eq:trans3} for $g\rho g^{-1}$. 
\begin{lem}
  \label{l:pconj}
  Assuming that both Poincare series converge absolutely, then one has
  \[
    P(g\rho g^{-1}, g^{-t}h\bar g^{-1},\tau) = g^{-t}P(\rho,h,\tau)\bar g^{-1}.
  \]
\end{lem}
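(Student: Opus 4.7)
The plan is to verify the identity by a direct term-by-term computation on the left-hand side. First I would invoke the observation made just before the lemma: if $h$ satisfies the $\pm T$-invariance conditions \eqref{eq:trans2} and \eqref{eq:trans3} relative to $\rho$, then $g^{-t}h\bar g^{-1}$ satisfies them relative to $g\rho g^{-1}$. This guarantees that the left-hand Poincar\'e series is well-defined as a sum over $\langle \pm T\rangle\backslash \Gamma$. Together with the hypothesis that both series converge absolutely, this justifies all rearrangements that follow.

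Next I would expand a single summand on the left using the elementary identities $(g\rho(\gamma)g^{-1})^{t} = g^{-t}\rho(\gamma)^{t}g^{t}$ and $\overline{g\rho(\gamma)g^{-1}} = \bar g\,\overline{\rho(\gamma)}\,\bar g^{-1}$. Substituting these together with the middle factor $g^{-t}h(\gamma\tau)\bar g^{-1}$, the $\gamma$-th term of the left-hand series becomes
\[
g^{-t}\rho(\gamma)^{t}g^{t}\cdot g^{-t}h(\gamma\tau)\bar g^{-1}\cdot \bar g\,\overline{\rho(\gamma)}\,\bar g^{-1}
= g^{-t}\rho(\gamma)^{t}h(\gamma\tau)\overline{\rho(\gamma)}\bar g^{-1},
\]
since the adjacent pairs $g^{t}g^{-t}$ and $\bar g^{-1}\bar g$ collapse to the identity. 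Because the outer factors $g^{-t}$ and $\bar g^{-1}$ are independent of $\gamma$, absolute convergence lets me pull them outside the sum, recovering precisely $g^{-t}P(\rho,h,\tau)\bar g^{-1}$.

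There is no real obstacle here: the lemma is essentially the statement that the construction $h\mapsto P(\rho,h,\tau)$ is equivariant with respect to simultaneous conjugation by $g$, and the verification is a bookkeeping exercise. The only care required is in distinguishing the transpose operation from complex conjugation, since $g^{t}$ and $\bar g$ play different roles on the left and right of $h(\gamma\tau)$, and in confirming that the coset space $\langle \pm T\rangle\backslash \Gamma$ indexing the two series is genuinely the same.
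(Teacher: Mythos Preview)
Your proposal is correct and matches the paper's own proof: both perform the direct term-by-term computation, expanding $(g\rho(\gamma)g^{-1})^{t}$ and $\overline{g\rho(\gamma)g^{-1}}$ and cancelling the adjacent $g^{t}g^{-t}$ and $\bar g^{-1}\bar g$ factors before pulling the constants $g^{-t}$ and $\bar g^{-1}$ outside the sum. Your write-up is simply more explicit about the intermediate steps and the justification via absolute convergence.
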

\begin{proof}
  The proof is a direct computation:
  \begin{align*}
P(g\rho g^{-1}, g^{-t}h\bar g^{-1},\tau) =&\sum_{\gamma \in \langle \pm T\rangle\backslash \Gamma} (g\rho(\gamma)g^{-1})^{t}g^{-t}h(\gamma \alpha\tau)\bar g^{-1}\bar g\overline{\rho(\gamma)}\bar g^{-1}\\
    =& g^{-t}P(\rho,h,\tau)\bar g^{-1}.
  \end{align*}
\end{proof}

Our next goal is to describe convenient choices of functions $h$ satisfying equations \eqref{eq:trans2} and \eqref{eq:trans3}. To this end we introduce the notion of exponents. Define $\be(M) = e^{2\pi i M}$ for matrices $M$.
\begin{dfn}
  \label{d:exponents}
  A choice of \emph{exponents} for $\rho$ is a matrix $L$ such that $\rho(T) = \be(L)$.
\end{dfn}
Since the matrix exponential is surjective, exponents always exist. They can be described explicitly in terms of a Jordan canonical form for $\rho(T)$, cf.  Theorem 3.7 of \cite{CandeloriFranc}, and this description shows that if $X$ commutes with $\rho(T)$, then $X$ also commutes with a choice of exponents.

Let $L$ be a choice of exponents for $\rho$. Since the matrix exponential satisfies $\be(X+Y) = \be(X)\be(Y)$ provided that $XY=YX$, it follows that
\[\be(L(\tau+1)) = \rho(T)\be(L\tau)=\be(L\tau)\rho(T).\]
Therefore, for any $h\in\Herm_d$, the function
\begin{equation}
  \label{eq:hchoice}
  h(\tau) = \be(-L\tau)^th\overline{\be(-L\tau)}
\end{equation}
is Hermitian and satisfies equation \eqref{eq:trans2}. However, $h(\tau)$ need not satisfy equation \eqref{eq:trans3}, and it need not be positive definite in general. Therefore we introduce a set of admissible choices for $h$:
\begin{dfn}
  \label{d:pos}
  Given a representation $\rho\colon \Gamma \to \GL_d(\CC)$ define
  \[
  \Pos(\rho) \df \{h\in \Herm_d \mid h \textrm{ is positive definite and } \rho(-I)^th\overline{\rho(-I)}=h\}.
  \]
\end{dfn}
\begin{rmk}
In many cases the condition that $\rho(-I)^th\overline{\rho(-I)}=h$ in Definition \ref{d:pos} holds automatically for all $h \in\Herm_n$. This is so for example if $\rho$ is irreducible, for then one has $\rho(-I) = \pm I$. In such cases $\Pos(\rho)$ is the set of all positive definite matrices in $\Herm_d$.
\end{rmk}

\begin{prop}
  Suppose that $\rho(-I)$ is unitary. Then the set $\Pos(\rho)$ contains $I$, and it is closed under addition and under multiplication by positive real numbers. Furthermore, if $h\in \Pos(\rho)$, then $g^th\bar g$ is positive definite for all $g \in \GL_d(\CC)$.
\end{prop}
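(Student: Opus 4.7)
The proposition is a collection of linear-algebra verifications, and my plan is to handle the four claims in sequence, with no single step presenting serious difficulty.

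First, for $I \in \Pos(\rho)$: the hypothesis that $\rho(-I)$ is unitary means $\overline{\rho(-I)}^t \rho(-I) = I$. Taking complex conjugate of both sides yields $\rho(-I)^t \overline{\rho(-I)} = I$, which is exactly the invariance condition \eqref{eq:trans3} evaluated at $h = I$. Combined with the obvious positive-definiteness of $I$, this settles the first claim.

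Second, for closure under addition and multiplication by positive reals: the map $h \mapsto \rho(-I)^t h \overline{\rho(-I)}$ is $\RR$-linear in $h$, so its fixed-point set is a real subspace of $\Herm_d$, hence closed under sums and positive real scaling. Intersecting with the convex cone of positive-definite matrices in $\Herm_d$, which is itself closed under sums and multiplication by positive reals, yields exactly $\Pos(\rho)$, so the desired closure properties follow immediately.

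Finally, for the claim that $g^t h \bar g$ is positive definite for any $h \in \Pos(\rho)$ and any $g \in \GL_d(\CC)$: Hermitianness is the computation
\[
(g^t h \bar g)^{\ast} = g^t h^{\ast} \bar g = g^t h \bar g,
\]
where $\ast$ denotes conjugate transpose and the second equality uses $h^{\ast} = h$. Positivity follows by computing, for any nonzero $v \in \CC^d$,
\[
v^{\ast} (g^t h \bar g) v = (\bar g v)^{\ast} h (\bar g v) > 0,
\]
where the inequality uses positive-definiteness of $h$ together with the fact that $\bar g v \neq 0$, since $\bar g$ is invertible. The only real subtlety in the whole argument lies in bookkeeping around the conjugate-transpose conventions; there is no substantial mathematical obstacle.
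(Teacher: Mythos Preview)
Your proof is correct and follows essentially the same approach as the paper's: unitarity of $\rho(-I)$ gives the invariance condition for $h=I$, closure under sums and positive scalings is immediate, and positivity of $g^th\bar g$ is checked by substituting $w=\bar g v$ (the paper writes $w=gz$ with the convention $z^tM\bar z$ instead of $v^*Mv$, which is equivalent). Your write-up is slightly more detailed in justifying the closure properties and in explicitly checking Hermitianness, but there is no substantive difference in method.
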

\begin{proof}
  Necessarily $\rho(T)^t\overline{\rho(T)}$ is Hermitian, and it is also necessarily positive definite. The condition that $\rho(-I)$ is unitary says exactly that $I$ satisfies the second definining condition of $\Pos(\rho)$, so that $I \in \Pos(\rho)$ when $\rho(-I)$ is unitary. Closure of $\Pos(\rho)$ under addition and positive real rescalings is clear. Similarly, if $z$ is a complex column vector then $z^tg^th\overline{gz} = w^th\bar w$ where $w = gz$. Since $g$ is invertible, $w$ is never zero, and so $w^th\bar w > 0$. This concludes the proof.
\end{proof}

\begin{dfn}
  \label{d:eisenstein}
  Let $\rho$ denote a representation of $\Gamma$, and let $L$ denote a choice of exponents for $\rho$. Then the associated \emph{Eisenstein metric} is the infinite series
  \[
H(\rho,L,h,\tau,s) \df \sum_{\gamma \in \langle \pm T\rangle\backslash \Gamma} \rho(\gamma)^{t}\be(-L^t(\gamma \cdot \tau))h\overline{\be(-L(\gamma \cdot \tau))}\overline{\rho(\gamma)}\Im(\gamma\cdot\tau)^s,
\]
where $h \in \Pos(\rho)$ and $\tau\in \uhp$.
\end{dfn}
In the definition above, the dot in $\gamma \cdot\tau$ denotes the action of $\Gamma$ on $\uhp$, not matrix multiplication. All other products in the expression defining Eisenstein metrics are ordinary matrix products. When $\rho$ is trivial, $L=0$, and $h=1$, then this definition gives the usual Eisenstein series.

The formation of $H(\rho,L,h,\tau,s)$ is linear in $h$. We will often write $H(h,\tau,s)$ for these Eisenstein metrics when the dependence on $\rho$ and $L$ is clear. Our next goal is to prove that $H(h,\tau,s)$ converges absolutely if the real part of $s$ is large enough. The proof is analogous to the proof of Theorem 3.2 in \cite{KnoppMason2}. We will show that one has convergence when $\Re(s)$ is large even if $(\rho,L)$ varies in a family, as in the following definitions:

\begin{dfn}
  Let $U\subseteq \CC^m$ denote an open subset. A \emph{family of representations} for $\Gamma$ varying analytically over $U$ consists of an analytic map
  \[
  \rho \colon U \to \Hom(\Gamma,\GL_n(\CC)).
  \]
\end{dfn}
If $\rho$ is a family of representations then we usually write $\rho_u$ for the value of $\rho$ at $u \in U$. For each $\gamma \in \Gamma$ we obtain a function $\rho(\gamma) \colon U \to \GL_n(\CC)$, and the analyticity of $\rho$ consists of the analyticity of these maps. It suffices to test analyticity on a set of generators for $\Gamma$. If $\cO(U)$ denotes the ring of analytic functions on $U$, then a family of representations on $U$ is the same thing as a homomorphism
\[
  \rho \colon \Gamma \to \GL_n(\cO(U)).
\]
\begin{dfn}
  A \emph{choice of exponents} for a family of representations $\rho$ on $U\subseteq \CC^m$ consists of a holomorphic map $L \colon U \to M_m(\CC)$ such that $\rho(T) = e^{2\pi i L}$.
\end{dfn}
As above, we will sometimes write $L_u$ for the exponents evaluated at a point $u \in U$.

\begin{ex}
There exists an analytic family of representations on $\CC^\times$ determined by
  \begin{align*}
    \rho(T) &= \twomat uu0{u^{-1}}, & \rho(S) &= \twomat{0}{-u}{u^{-1}}0.
  \end{align*}
  The specializations $\rho_u$ are irreducible as long as $u^4-u^2+1 \neq 0$. Since $\Tr(\rho(T)) = u+u^{-1}$, one sees that this family is nontrivial, in the sense that not all fibers are isomorphic representations, though one does have $\rho_u \cong \rho_{u^{-1}}$. Modulo this identification, this describes one component of the universal family of irreducible representations of $\Gamma$ of rank two, cf. \cite{Mason1}, \cite{TubaWenzl}.

  Let $\log$ denote the branch of the complex logarithm such that the imaginary parts of $\log(z)$ are contained in $[0,2\pi)$. Then a choice of exponents defined on $\CC \setminus \RR_{\geq 0}$ is
  \[
  L = \frac{1}{2\pi i}\twomat{\log(u)}{\frac{(\log(u)-\log(u^{-1}))u^2}{u^2-1}}{0}{\log(u^{-1})}
\]
The apparent singularity at $u=-1$ is removable, while the singularity on the branch cut at $u=1$ is not. These values of $u$ correspond to the specializations of $\rho$ where $\rho(T)$ is not diagonalizable.
 \end{ex}

\begin{dfn}
Let $(\rho,L)$ denote a family of representations and a choice of exponents on some open subset $U\subseteq \CC$. Then an analytic function $P\colon U \to \GL_d(\CC)$ is said to \emph{put $L$ into Jordan canonical form} provided that $PLP^{-1}$ is in Jordan canonical form for all $u \in \CC$.
\end{dfn}

Note that while each fiber $L_u$ of a choice of exponents can be put into Jordan canonical form, the existence of an analytic choice of change of basis matrix $P$ putting $L$ simultaneously into Jordan canonical form at all points of $U$ is not guaranteed.

\begin{dfn}
  Let $\rho$ be a family of representations of $\Gamma$ on a set $U$. Define
  \[
  \Pos(\rho) \df \bigcap_{u\in U} \Pos(\rho_u).
\]
\end{dfn}

\begin{rmk}
It is frequently the case that $\Pos(\rho)$ is nonempty for nontrivial families $\rho$. For example, if $U$ is connected and $\rho$ is irreducible, then $\rho(-I) = \pm I$ is constant on $U$, and so $\Pos(\rho)$ is the set of all positive definite matrices in $\Herm_d$.
\end{rmk}

\begin{prop}
  \label{p:convergence}
  Let $U \subseteq \CC^m$ be open, let $\rho$ be a family of representations of $\Gamma$ on $U$, let $L$ be a corresponding choice of exponents, and suppose that $P$ puts $L$ into Jordan canonical form. Then for each compact subset $K\subseteq U$, there exists a real number $A$ depending on $K$, $\rho|_K$, $L|_K$ and $P|_K$, such that the following hold:
  \begin{enumerate}
  \item for each $u \in K$, the series $H(\rho_u,L_u,h,\tau,s)$ converges uniformly and absolutely to a smooth function of $\tau$ for all $s \in \CC$ with $\Re(s) > A$, and for all $h \in \Pos(\rho)$;
  \item this function is holomorphic as a function of $s$ and $u$, and real analytic as a function of $h$.
  \end{enumerate}
\end{prop}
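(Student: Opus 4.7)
The plan is to dominate each matrix-valued summand by a polynomial in $|c|+|d|$ divided by $|c\tau+d|^{2\Re(s)}$, and then compare with a classical Eisenstein sum $\sum_{\gcd(c,d)=1}|c\tau+d|^{-2\sigma}$, which converges absolutely for $\sigma>1$. All constants must be chosen uniform in $u\in K$, in $\tau$ ranging over a compact $K_0\subset\uhp$, in $h$ on bounded subsets of $\Pos(\rho)$, and locally uniform in $s$.

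I would first fix canonical representatives of $\langle\pm T\rangle\backslash\Gamma$: the identity, and for each coprime pair $(c,d)\in\ZZ^2$ with $c>0$ the unique $\gamma=\stwomat{a}{b}{c}{d}$ having $0\leq a<c$. The identity $\gamma\cdot\tau = a/c - 1/(c(c\tau+d))$ shows that $|\Re(\gamma\cdot\tau)|$ and $|\gamma\cdot\tau|$ are uniformly bounded for $\tau\in K_0$ and nontrivial $\gamma$, while $\Im(\gamma\cdot\tau)=\Im(\tau)/|c\tau+d|^2$. Since $(c,d)\mapsto |c\tau+d|^2$ is a positive-definite quadratic form whose smallest eigenvalue is continuous and strictly positive in $\tau\in K_0$, there is a constant $M_1(K_0)$ with $(|c|+|d|)\leq M_1|c\tau+d|$, and also $\|\gamma\|\leq M_2(|c|+|d|)$ for the normalized representatives.

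Next I would bound the exponent factor $\|\be(-L_u(\gamma\cdot\tau))\|$ using the Jordan decomposition $L_u = P_u J_u P_u^{-1}$: for each Jordan block with eigenvalue $\lambda_u$ and nilpotent part $N$ of size $k$,
\[
\be(-J_u z) = e^{-2\pi i\lambda_u z}\sum_{j=0}^{k-1}\frac{(-2\pi i N z)^j}{j!},
\]
whose norm is majorized by $e^{2\pi(|\Im\lambda_u|\,|\Re z|+|\Re\lambda_u|\,|\Im z|)}$ times a polynomial in $|z|$. Setting $z=\gamma\cdot\tau$, compactness of $K$ bounds the eigenvalues $\lambda_u$ and the norms $\|P_u^{\pm 1}\|$ (the latter because $P$ is analytic on $K$), and the previous step bounds $|\Re z|$, $|z|$, $|\Im z|$, yielding $\|\be(-L_u(\gamma\cdot\tau))\|, \|\be(-L_u^t(\gamma\cdot\tau))\|\leq C_1$ uniformly in $u\in K$, $\gamma$, and $\tau\in K_0$.

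Using the same Jordan apparatus on $\rho_u(T)=\be(L_u)$ to bound $\|\rho_u(T)^q\|$ by a polynomial in $|q|$, and writing each normalized $\gamma$ via its continued-fraction expansion $\gamma = T^{q_0} S T^{q_1} S\cdots S T^{q_n}$ with $n=O(\log\|\gamma\|)$ and $\prod(1+|q_i|)\ll\|\gamma\|$, yields a uniform polynomial bound $\|\rho_u(\gamma)\|\leq C_3 \|\gamma\|^N$ for some $N=N(K)$. Combining with the previous steps, each summand has norm bounded by $C_4\|h\|\,|c\tau+d|^{2N-2\Re(s)}$ on $K\times K_0$, and the classical comparison sum converges for $2\Re(s)-2N>2$. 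Taking $A=N+1$ gives absolute and locally uniform convergence. Smoothness in $\tau$ follows by applying the same domination to termwise derivatives $\partial_\tau^\alpha\partial_{\bar\tau}^\beta$, which introduce only additional polynomial factors of $|c\tau+d|$ absorbed by enlarging $A$; joint holomorphy in $(s,u)$ then follows from Weierstrass's theorem on uniformly convergent sums of holomorphic functions, and real analyticity (in fact $\CC$-linearity) in the entries of $h$ is immediate. The main technical obstacle is the polynomial bound on $\|\rho_u(\gamma)\|$ with exponent uniform on $K$; this requires polynomial growth of $\rho_u(T)^q$ (controlled by the Jordan-form analysis of $L_u$ together with continuity of $P_u, P_u^{-1}$ on $K$) together with the logarithmic bound on $n$ from the Euclidean algorithm.
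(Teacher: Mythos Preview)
Your overall architecture matches the paper's proof exactly: normalize coset representatives so that $\gamma\cdot\tau$ stays in a bounded region, use the Jordan form $L_u=P_u(D_u+N_u)P_u^{-1}$ to control $\norm{\be(-L_u(\gamma\cdot\tau))}$, obtain a polynomial bound on $\norm{\rho_u(\gamma)}$ in terms of $c,d$, and then compare with the classical Eisenstein sum. The paper normalizes via $0\leq\Re(\gamma\cdot\tau)<1$ while you normalize via $0\leq a<c$, but these produce the same estimates; and where you spell out Weierstrass for holomorphy in $(s,u)$, the paper simply asserts it.

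The one substantive divergence is in the polynomial bound on $\norm{\rho_u(\gamma)}$. The paper does not argue this point at all: it just invokes Corollary~3.5 of Knopp--Mason (\emph{Logarithmic vector-valued modular forms and polynomial-growth estimates}) as a black box. You instead try to prove it directly via the continued-fraction word $\gamma=T^{q_0}ST^{q_1}\cdots ST^{q_n}$ and a polynomial bound on $\norm{\rho_u(T)^q}$. Here your argument has a gap. Your own ``Jordan apparatus'' gives
\[
\norm{\be(L_u q)}\;\leq\;\norm{P_u}\,\norm{P_u^{-1}}\cdot e^{2\pi\abs{\Im\lambda_u}\,\abs{q}}\cdot(\text{polynomial in }\abs{q}),
\]
and this is polynomial in $\abs{q}$ only when every eigenvalue $\lambda_u$ of $L_u$ is real, i.e.\ when every eigenvalue of $\rho_u(T)$ lies on the unit circle. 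That is not a stated hypothesis of the Proposition (indeed the paper's running example $\rho(T)=\stwomat{u}{u}{0}{u^{-1}}$ has $\abs{u}\neq 1$ generically). With a non-unitary eigenvalue present, $\norm{\rho_u(T)^q}$ grows like $\abs{\mu}^{\abs{q}}$ for some $\abs{\mu}>1$, and since a single large partial quotient $q_i$ can be of order $\norm{\gamma}$, the continued-fraction product only yields an exponential bound on $\norm{\rho_u(\gamma)}$, not a polynomial one. So as written your argument establishes the Proposition only under the additional assumption that the eigenvalues of $L_u$ are real on $K$ (which, to be fair, covers the inclusion representation of Section~4 and the unitary-$\rho(T)$ case of Sections~5--6). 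For the general statement you must appeal to the Knopp--Mason estimate, as the paper does, rather than to your Jordan analysis of $\rho_u(T)^q$.
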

\begin{proof}
  The proof is similar to the proof of Theorem 3.3 of \cite{KnoppMason1}, though there are enough differences in the statements of these results that we repeat some details. If $M$ is a matrix, then let $\norm{M}$ denote the supremum norm on its entries. Then we have
  \begin{align*}
  \norm{H(h,\tau,s)} \leq 1+\norm{h}\sum_{c=1}^\infty\sum_{\substack{d\in\ZZ\\ \gcd(c,d)=1}} \norm{\rho(\gamma)}^2\norm{\be(-L(\gamma \cdot \tau))}^2\frac{y^s}{\abs{c\tau+d}^{2s}}
  \end{align*}
  where $\gamma = \stwomat abcd$ for some choice of $a,b\in\ZZ$. After possibly replacing $\gamma$ by $T^n\gamma$, we may assume that $0\leq \Re(\gamma \cdot \tau) < 1$. We then have the basic estimate,
  \[\abs{\gamma \cdot \tau}^2 \leq 1+\Im(\gamma \cdot \tau)^2=1+\frac{y^2}{((cx+d)^2+c^2y^2)^{2}}\leq 1+\frac{1}{c^4y^2}.\]

  Use the existence of the Jordan canonical form on $U$ to write $-L$ in the form $-L=P(D+N)P^{-1}$ where $D$ is diagonal, $DN=ND$, and $N^d=0$, where $d=\dim \rho$. We obtain the estimate
  \begin{align*}
    \norm{\be(-L(\gamma \cdot \tau))}^2 \leq &\norm{P}^2\norm{P^{-1}}^2\norm{\be(D(\gamma \cdot \tau))}^2\norm{\be(N(\gamma \cdot \tau))}^2\\
    \leq &\norm{P}^2\norm{P^{-1}}^2\norm{\be(D(\gamma \cdot \tau))}^2\sum_{k=0}^{d-1}\frac{(2\pi)^k}{k!}\norm{N}^{2k}\abs{\gamma \cdot \tau}^{2k}
  \end{align*}
  Thus, if we set $C_1 = e\max(1,\norm{P}^2,\norm{P^{-1}}^2, \norm{N},\norm{N^2},\ldots, \norm{N^{d-1}})$ then we deduce that
\[    \norm{\be(-L(\gamma \cdot \tau))}^2 \leq C_1\norm{\be(D(\gamma \cdot \tau))}^2e^{\frac{2\pi}{c^4y^2}}.
  \]
  To continue, write $D = U+iV$ where $U$ and $V$ are real diagonal matrices, so that in particular $UV=VU$. Then
  \begin{align*}
    \norm{\be(D(\gamma \cdot \tau))}^2 \leq &\norm{\be(U(\gamma \cdot \tau))}^2\norm{\be(iV(\gamma \cdot \tau))}^2\\
    =&\norm{\be(iU\Im(\gamma \cdot \tau))}^2\norm{\be(iV\Re(\gamma \cdot \tau))}^2\\
    =&\norm{e^{-\frac{2\pi yU}{\abs{c\tau+d}^2}}}^2\norm{e^{-2\pi V\Re(\gamma \cdot \tau)}}^2\\
       \leq & C_2\norm{e^{-\frac{2\pi yU}{\abs{c\tau+d}^2}}}^2
  \end{align*}
  where $C_2 = \max(1,\norm{e^{-2\pi V}}^2)$. Putting these estimates together, we have shown that if we normalize our representatives $\gamma$ for cosets in $\langle \pm T\rangle \backslash \Gamma$ such that $0\leq \Re(\gamma\tau) < 1$, then
    \begin{equation}
      \label{eq:estimate1}
      \norm{H(h,\tau,s)} \leq 1+C_1C_2\norm{h}y^s\sum_{c=1}^\infty\sum_{\substack{d\in\ZZ\\ \gcd(c,d)=1}} \norm{\rho(\gamma)}^2\norm{e^{-\frac{2\pi yU}{\abs{c\tau+d}^2}}}^2\frac{e^{\frac{2\pi}{c^4y^2}}}{\abs{c\tau+d}^{2s}}.
    \end{equation}

    It remains to estimate $\norm{\rho(\gamma)}$. For this one can use Corollary 3.5 of \cite{KnoppMason4} to obtain an estimate for this term that is polynomial in $c^2+d^2$, with constants and degree that depend only on $\rho$. Since the exponential factors in equation \eqref{eq:estimate1} converge to $1$ as $c$ grows, this estimate is sufficient to prove part (1) of the proposition. All constants in these various estimates can be chosen uniformly on $K$, so that (2) follows as well.
  \end{proof}
\begin{rmk}
The case of a single representation can be deduced from Proposition \ref{p:convergence} by considering a constant family of representations on $\CC$. One can always put a constant family of exponents into Jordan canonical form, so the hypothesis that such a Jordan canonical form exists can be ignored when considering individual representations.
\end{rmk}

\section{Tame harmonic metrics}
\label{s:harmonic}
Nonabelian Hodge theory describes a correspondence between categories of representations of fundamental groups, and categories of \emph{Higgs bundles} on the underlying base manifold. A key ingredient for establishing such correspondences lies in proving the existence of metrics satisfying a nonlinear differential equation as in the following definition.
\begin{dfn}
  \label{d:harmonic}
  A Hermitian positive definite metric $H \colon \uhp \to M_d(\CC)$ for a representation $\rho$ is said to be \emph{harmonic} provided that
  \[
  \partial \bar \partial \log(H) = \tfrac 12 [\bar\partial \log(H),\partial \log(H)]
\]
where $\partial \log(H) = H^{-1}\partial(H)$, $\bar\partial \log(H) = H^{-1}\bar\partial(H)$.
\end{dfn}

\begin{ex}
  If $\rho$ is a one-dimensional character (necessarily unitary in the case $\Gamma = \SL_2(\ZZ)$), then the commutator in Definition \ref{d:harmonic} vanishes, and the harmonicity condition simplifies to $\log(H)$ being harmonic in the usual sense. If more generally $\rho$ is unitary, the constant map $H=I$ satisfies the necessary invariance property in this case, and it defines a harmonic metric. This is the usual Petersson inner product for unitary representations.
\end{ex}

\begin{lem}
Let $H$ be a harmonic metric for $\rho$, and let $g \in \GL_d(\CC)$. Then $g^tH\bar g$ is a harmonic metric for $g^{-1}\rho g$. 
\end{lem}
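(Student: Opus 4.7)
The plan is to verify two things in sequence: that $g^tH\bar g$ is a metric for $g^{-1}\rho g$ (i.e.\ positive-definite, Hermitian, and satisfying the appropriate transformation law), and that it satisfies the harmonic PDE of Definition \ref{d:harmonic}. The first part is a purely algebraic check. Hermitian symmetry follows from $\overline{g^tH\bar g}^t = g^t\overline{H}^t\bar g = g^tH\bar g$, and positive-definiteness from the identity $z^t(g^tH\bar g)\bar z = (gz)^tH\overline{(gz)}$ with $gz\neq 0$ for $z\neq 0$. For the transformation law under $g^{-1}\rho g$, we compute
\[
(g^{-1}\rho(\gamma)g)^t\bigl(g^tH(\gamma\tau)\bar g\bigr)\overline{g^{-1}\rho(\gamma)g} = g^t\rho(\gamma)^tg^{-t}g^tH(\gamma\tau)\bar g\bar g^{-1}\overline{\rho(\gamma)}\bar g = g^tH(\tau)\bar g,
\]
using that $H$ is a metric for $\rho$.

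The second part rests on the fact that $g$ is a constant matrix, so $\partial$ and $\bar\partial$ commute with multiplication by $g,g^t,\bar g,\bar g^{-1}$. Set $H' \df g^tH\bar g$, so that $(H')^{-1} = \bar g^{-1}H^{-1}g^{-t}$ and $\partial H' = g^t(\partial H)\bar g$. Then
\[
\partial\log(H') = (H')^{-1}\partial H' = \bar g^{-1}H^{-1}g^{-t}g^t(\partial H)\bar g = \bar g^{-1}(\partial\log H)\bar g,
\]
and similarly $\bar\partial\log(H') = \bar g^{-1}(\bar\partial\log H)\bar g$.

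Conjugation by $\bar g$ preserves both sides of the harmonic equation: applying $\partial$ to the formula above yields $\partial\bar\partial\log(H') = \bar g^{-1}(\partial\bar\partial\log H)\bar g$, while
\[
[\bar\partial\log H',\partial\log H'] = \bar g^{-1}[\bar\partial\log H,\partial\log H]\bar g
\]
since conjugation by the invertible constant matrix $\bar g$ is a Lie algebra homomorphism. The harmonicity of $H$ then gives the harmonicity of $H'$ after conjugating the defining identity by $\bar g$. There is no real obstacle here; the only thing to be careful about is keeping the placement of $g$ versus $\bar g$ straight in the non-holomorphic setting, and noting that the logarithmic derivatives $\partial\log H$ and $\bar\partial\log H$ are not themselves $H$-conjugates of $\partial H$ and $\bar\partial H$, but intertwine with constant conjugation exactly as computed above.
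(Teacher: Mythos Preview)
Your proof is correct and follows essentially the same approach as the paper: verify the transformation law for $g^{-1}\rho g$, then show that $\partial\log(g^tH\bar g) = \bar g^{-1}(\partial\log H)\bar g$ (and likewise for $\bar\partial$), and conclude by observing that conjugation by $\bar g$ carries the harmonic equation to itself. You are in fact slightly more thorough than the paper, which omits the explicit checks of Hermitian symmetry and positive-definiteness.
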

\begin{proof}
  First since $\rho(\gamma)^tH(\gamma \tau)\overline{\rho(\gamma)} = H(\tau)$, we find that
\begin{align*}
  g^tH(\tau)\bar g &= g^t\rho(\gamma)^tH(\gamma \tau)\overline{\rho(\gamma)g}\\
                   &= (g^{-1}\rho(\gamma)g)^tg^tH(\gamma\tau)\bar g (\overline{g^{-1}\rho(\gamma)g})
\end{align*}
Next notice that
\[
  \partial\log(g^tH\bar g) =(g^tH\bar g)^{-1}\partial(g^tH\bar g)=\bar g^{-1}H^{-1}g^{-t}g^t\partial(H)\bar g = \bar g^{-1}\partial \log(H) \bar g
\]
and similarly for $\bar\partial \log (g^tH\bar g)$. Therefore,
\begin{align*}
  \partial\bar\partial\log(g^tH\bar g ) &=\bar g^{-1}\partial\bar\partial \log(H) \bar g\\
                                        &= \tfrac 12\bar g^{-1}[\bar\partial \log(H),\partial \log(H)] \bar g\\
                                        &= \tfrac 12\bar [\bar g^{-1}\bar\partial \log(H)\bar g,\bar g^{-1}\partial \log(H) \bar g]\\
  &=\tfrac 12[\bar\partial \log(g^tH\bar g),\partial \log(g^tH\bar g)].
\end{align*}
\end{proof}

\begin{dfn}
  A metric $H \colon \uhp \to M_d(\CC)$ for a representation $\rho$ is said to be \emph{tame}, or of \emph{slow growth}, for a choice of exponents $L$ provided that there exist constants $C$, $N$ such that
  \[
  \norm{\be(L^t\tau)H(\tau)\overline{\be(L\tau)}} \leq Cy^N.
  \]
\end{dfn}

Tameness arises naturally in \cite{Simpson1} when considering correspondences between stable connections and stable Higgs bundles. In general it can be difficult to write down explicit examples of harmonic metrics. Our interest in Eisenstein metrics is that they provide analytic families of metrics with appropriate invariance properties under the action of $\Gamma$. The question is whether some specialization, residue, or some other metric derived from $H(\tau,h,s)$, could satisfy the harmonicity and tameness conditions. Moreover, since the formation of Eisenstein metrics is well-adapted to working with families of representations, one might obtain universal familes of harmonic metrics living over moduli spaces of Higgs bundles. At present no general results in this direction are known, though we discuss some preliminary examples below.

\section{The inclusion representation}
\label{s:inclusion}
Suppose that $\rho \colon \Gamma \hookrightarrow \GL_2(\CC)$ is the inclusion representation. In this case a harmonic metric is known to be
\begin{equation}
\label{eq:geodesic}
  K(\tau) = \frac{1}{y}\twomat{1}{-x}{-x}{x^2+y^2}
\end{equation}
where $\tau = x+iy$. This case is rather special, since $\rho$ in fact extends to the ambient Lie group $\SL_2(\RR)$. In fact, this metric is an example of a \emph{totally geodesic metric} as in Example 4.4 of \cite{FrancRayan}, or Example 14.1.2 of \cite{CarlsonEtAl}. In this section we show that this metric $K(\tau)$ arises as a residue of Eisenstein metrics.

Notice that since in this case $\rho(-I) = -I$, the set $\Pos(\rho)$ is the set of all positive definite matrices in $\Herm_2$. The possible exponent choices $L$ take the form
\[2\pi i L = \twomat {2\pi i n}{1}{0}{2\pi i n}\]
for integers $n\in\ZZ$. Fix $n \in \ZZ$ and observe that $\be(L\tau)=q^n\stwomat{1}{\tau}{0}{1}$, where $q = \be(\tau)$. Thus, if for our choice of $h\in \Pos(\rho)$ we take $h=I$ then
  \[
    h(\tau) = \abs{q}^{2n}\twomat {1}{0}{-\tau}{1}\twomat{1}{-\overline\tau}{0}{1}=e^{-4\pi ny}\twomat{1}{-\bar\tau}{-\tau}{1+\abs{\tau}^2}.
  \]
Write $H(\tau,s) = H(\rho,L,I,\tau,s)$ and compute:
  \begin{align*}
    H(\tau,s) &= y^sh(\tau)+\sum_{c\geq 1}\sum_{\gcd(c,d) =1} \twomat{a}{c}{b}{d}\twomat{1}{0}{-\gamma \tau}{1}\twomat{1}{-\gamma \bar\tau}{0}{1}\twomat abcd e^{-4\pi n \frac{y}{\abs{c\tau+d}^2}}\frac{y^s}{\abs{c\tau+d}^{2s}}\\
              &= y^sh(\tau)+\sum_{c\geq 1}\sum_{\gcd(c,d) =1} \twomat{a-c\gamma \tau}{c}{b-d\gamma\tau}{d}\twomat{a-c\gamma\bar\tau}{b-d\gamma\bar\tau}{c}{d}e^{-4\pi n \frac{y}{\abs{c\tau+d}^2}}\frac{y^s}{\abs{c\tau+d}^{2s}}\\
              &= y^sh(\tau)+\sum_{c\geq 1}\sum_{\gcd(c,d) =1} \twomat{1}{c}{-\tau}{d}\twomat{\abs{c\tau+d}^{-2}}{0}{0}{1}\twomat{1}{-\bar\tau}{c}{d}e^{-4\pi n \frac{y}{\abs{c\tau+d}^2}}\frac{y^s}{\abs{c\tau+d}^{2s}}\\
              &= y^sh(\tau)+\sum_{c\geq 1}\sum_{\gcd(c,d) =1} \twomat{\abs{c\tau+d}^{-2}}{c}{-\tfrac{\tau}{\abs{c\tau+d}^2}}{d}\twomat{1}{-\bar\tau}{c}{d}e^{-4\pi n \frac{y}{\abs{c\tau+d}^2}}\frac{y^s}{\abs{c\tau+d}^{2s}}\\
              &= y^sh(\tau)+\sum_{c\geq 1}\sum_{\gcd(c,d) =1} \twomat{c^2+\frac{1}{\abs{c\tau+d}^{2}}}{cd-\tfrac{\bar\tau}{\abs{c\tau+d}^2}}{cd-\tfrac{\tau}{\abs{c\tau+d}^2}}{d^2+\abs{\tfrac{\tau}{c\tau+d}}^2}e^{-4\pi n \frac{y}{\abs{c\tau+d}^2}}\frac{y^s}{\abs{c\tau+d}^{2s}}\\
    &= y^sh(\tau)+\sum_{k\geq 0}\frac{(-4\pi n)^k}{k!}\sum_{c\geq 1}\sum_{\gcd(c,d) =1} \twomat{c^2+\frac{1}{\abs{c\tau+d}^{2}}}{cd-\tfrac{\bar\tau}{\abs{c\tau+d}^2}}{cd-\tfrac{\tau}{\abs{c\tau+d}^2}}{d^2+\abs{\tfrac{\tau}{c\tau+d}}^2}\frac{y^{s+k}}{\abs{c\tau+d}^{2(s+k)}}
  \end{align*}
  
  Notice that with $E(\tau,s) = y^s+\sum_{c\geq 1}\sum_{\gcd(c,d)=1}\frac{y^s}{\abs{c\tau+d}^{2s}}$ equal to the usual real-analytic Eisenstein series we have
  \begin{align}
    \label{eq:inceis1}
    \nonumber   H(\tau,s) =& e^{-4\pi ny}y^s\twomat{0}{0}{0}{1}+y^{-1}\sum_{k\geq 0}\frac{(-4\pi n)^k}{k!}E(\tau,s+k+1)\twomat{1}{-\bar\tau}{-\tau}{\abs{\tau}^2}+\\
&\sum_{k\geq 0}\frac{(-4\pi n)^k}{k!}\sum_{c\geq 1}\sum_{\gcd(c,d) =1} \twomat{c^2}{cd}{cd}{d^2}\frac{y^{s+k}}{\abs{c\tau+d}^{2(s+k)}}
\end{align}
We now focus on the final terms using a standard approach:
\begin{align*}
  &\sum_{c\geq 1}\sum_{\gcd(c,d) =1} \twomat{c^2}{cd}{cd}{d^2}\frac{y^s}{\abs{c\tau+d}^{2s}}\\
  =&\sum_{c\geq 1}c^{2-2s}\sum_{\substack{d=1\\ \gcd(c,d)=1}}^c\sum_{u\in\ZZ}\twomat{1}{\tfrac dc + u}{\tfrac dc + u}{(\tfrac dc+u)^2}\frac{y^s}{\abs{\tau+\tfrac dc+u}^{2s}}\\
=&\twomat{1}{0}{-\tau}{1}\left(\sum_{c\geq 1}c^{2-2s}\sum_{\substack{d=1\\ \gcd(c,d)=1}}^c\sum_{u\in\ZZ} \twomat{1}{\bar\tau+\tfrac dc + u}{\tau+\tfrac dc + u}{\abs{\tau+\tfrac dc+u}^2}\frac{y^s}{\abs{\tau+\tfrac dc+u}^{2s}}\right)\twomat{1}{-\bar\tau}{0}{1}
\end{align*}
If $f(\tau)$ denotes the sum over $u$ in the previous line, then $f(\tau+1)=f(\tau)$ and the Poisson summation formula may be used. We must first compute the Fourier transform of the summand terms: if $\be(z) \df e^{2\pi i z}$ then the Fourier transform is
\begin{align*}
  &y^s\int_{-\infty}^\infty \twomat{1}{x-iy+\tfrac dc + t}{x+iy+\tfrac dc + t}{\abs{x+iy+\tfrac dc+t}^2}\frac{\be(-mt)}{\abs{x+iy+\tfrac dc+t}^{2s}}dt\\
  =&y^s\be(mx + m\tfrac dc)\int_{-\infty}^\infty \twomat{1}{r-iy}{r+iy}{r^2+y^2}\frac{\be(-mr)}{(r^2+y^2)^{s}}dr.
\end{align*}
Let $g(m,y,s) = y^s\int_{-\infty}^\infty \stwomat{1}{r-iy}{r+iy}{r^2+y^2}\tfrac{\be(-mr)}{(r^2+y^2)^{s}}dr$. Then, putting everything together, we have shown that:
\begin{align}
\label{eq:inceis2}
    \nonumber   H(\tau,s) =& y^se^{-4\pi ny}\twomat{0}{0}{0}{1}+y^{-1}\sum_{k\geq 0}\frac{(-4\pi n)^k}{k!}E(\tau,s+k+1)\twomat{1}{-\bar\tau}{-\tau}{\abs{\tau}^2}+\\
 & \twomat{1}{0}{-\tau}{1}\left(\sum_{k\geq 0}\frac{(-4\pi n)^k}{k!}\sum_{m\in\ZZ}\be(mx)g(m,y,s+k)\sum_{c\geq 1}c^{2-2(s+k)}\sum_{\substack{d=1\\ \gcd(c,d)=1}}^c \be(m\tfrac dc)\right)\twomat{1}{-\bar\tau}{0}{1}
\end{align}
Recall that we have the following evaluation of the Ramanujan sum:
\[
  \sum_{\substack{d=1\\ \gcd(c,d)=1}}^c \be(m\tfrac dc)=\begin{cases}
    \sum_{g\mid \gcd(c,m)}\mu(\tfrac cg)g & m\neq 0,\\
    \phi(c) & m=0,
  \end{cases}
\]
and therefore
\[
  \sum_{c\geq 1}c^{2-2(s+k)}\sum_{\substack{d=1\\ \gcd(c,d)=1}}^c \be(m\tfrac dc)=\begin{cases}
    \frac{\sigma_{3-2(s+k)}(\abs{m})}{\zeta(2(s+k)-2)} & m\neq 0,\\
    \frac{\zeta(2(s+k)-3)}{\zeta(2(s+k)-2)}&m=0.
  \end{cases}
\]
That is, we have shown the following: $H(\tau,s) = \stwomat 10{-\tau}1 \tilde H(\tau,s) \stwomat 1{-\bar \tau}01$ where
\begin{align}
\label{eq:inceis3}
    \nonumber   &\tilde H(\tau,s) = y^se^{-4\pi ny}\twomat{0}{0}{0}{1}+y^{-1}\sum_{k\geq 0}\frac{(-4\pi n)^k}{k!}E(\tau,s+k+1)\twomat{1}{0}{0}{0}+\\
  &\sum_{k\geq 0}\frac{(-4\pi n)^k}{k!}\left(\frac{\zeta(2(s+k)-3)}{\zeta(2(s+k)-2)}g(0,y,s+k)+\sum_{\substack{m\in\ZZ\\ m \neq 0}}\frac{\be(mx)\sigma_{3-2(s+k)}(\abs{m})g(m,y,s+k)}{\zeta(2(s+k)-2)}\right)
\end{align}

To conclude the computation of the Fourier coefficients of $H(\tau,s)$ it now remains to give a more concrete expression for $g(n,y)$. Equations (3.18) and (3.19) of \cite{Iwaniec} yield:
\begin{equation}
  \label{eq:iwaniec}
\int_{-\infty}^\infty \frac{\be(-mr)}{(r^2+y^2)^{s}}dr = \begin{cases}
  \pi^{\frac 12}\frac{\Gamma(s-\frac 12)}{\Gamma(s)}y^{1-2s} & m=0,\\
  2\pi^s\Gamma(s)^{-1}\abs{m}^{s-\frac 12}y^{-s+\frac 12}K_{s-\frac 12}(2\pi \abs{m}y) & m\neq 0.
\end{cases}
\end{equation}
This allows us to evaluate the diagonal terms in $g(m,y,s)$. It remains to treat the anti-diagonal terms, and we first suppose $m\neq 0$. By the product rule, and since $\Re(s) \gg 0$,
\begin{align*}
  & \int_{-\infty}^\infty \frac{r\be(-mr)}{(r^2+y^2)^{s}}dr \\
  =&-\frac{1}{2\pi in}\lim_{N \to \infty}\left. \frac{r\be(-mr)}{(r^2+y^2)^s}\right|^N_{-N}+\frac{1}{2\pi im}\int_{-\infty}^\infty\frac{((r^2+y^2)^s-2sr^2(r^2+y^2)^{s-1})\be(-mr)}{(r^2+y^2)^{2s}}dr\\
  =&\frac{1}{2\pi i m}\int_{-\infty}^\infty\frac{\be(-mr)}{(r^2+y^2)^{s}}dr-\frac{2s}{2\pi i m}\int_{-\infty}^\infty\frac{\be(-mr)}{(r^2+y^2)^{s}}dr+\frac{2sy^2}{2\pi i m}\int_{-\infty}^\infty\frac{\be(-mr)}{(r^2+y^2)^{s+1}}dr
\end{align*}
That is, for $m\neq 0$ we've shown that:
\begin{align*}
  g(m,y,s) =& y^s\int_{-\infty}^\infty \twomat{\frac{\be(-mr)}{(r^2+y^2)^{s}}}{\left(\frac{1-2s}{2\pi i m}-iy\right)\frac{\be(-mr)}{(r^2+y^2)^{s}}+\frac{2sy^2}{2\pi i m}\frac{\be(-mr)}{(r^2+y^2)^{s+1}}}{\left(\frac{1-2s}{2\pi i m}+iy\right)\frac{\be(-mr)}{(r^2+y^2)^{s}}+\frac{2sy^2}{2\pi i m}\frac{\be(-mr)}{(r^2+y^2)^{s+1}}}{\frac{\be(-mr)}{(r^2+y^2)^{s-1}}}dr\\
  =& y^s\twomat{1}{\frac{1-2s}{2\pi i m}-iy}{\frac{1-2s}{2\pi i m}+iy}{0}2\pi^s\Gamma(s)^{-1}\abs{m}^{s-\frac 12}y^{-s+\frac 12}K_{s-\frac 12}(2\pi \abs{m}y)+\\
          &y^s\twomat{0}{\tfrac{2sy^2}{2\pi i m}}{\tfrac{2sy^2}{2\pi i m}}{0}2\pi^{s+1}\Gamma(s+1)^{-1}\abs{m}^{s+\frac 12}y^{-s-\frac 12}K_{s+\frac 12}(2\pi \abs{m}y)+\\
          &y^s\twomat{0}{0}{0}{1}2\pi^{s-1}\Gamma(s-1)^{-1}\abs{m}^{s-\frac 32}y^{-s+\frac 32}K_{s-\frac 32}(2\pi \abs{m}y).
\end{align*}
This can be cleaned up somewhat using the functional equation $\Gamma(s+1)=s\Gamma(s)$ for the gamma function:
\begin{align*}
  g(m,y,s) =&\twomat{1}{\frac{1-2s}{2\pi i m}-iy}{\frac{1-2s}{2\pi im}+iy}{0}2\pi^s\Gamma(s)^{-1}\abs{m}^{s-\frac 12}y^{\frac 12}K_{s-\frac 12}(2\pi \abs{m}y)+\\
          &\twomat{0}{\tfrac{2y^2}{2\pi im}}{\tfrac{2y^2}{2\pi i m}}{0}2\pi^{s+1}\Gamma(s)^{-1}\abs{m}^{s+\frac 12}y^{-\frac 12}K_{s+\frac 12}(2\pi \abs{m}y)+\\
          &\twomat{0}{0}{0}{1}2\pi^{s-1}(s-1)\Gamma(s)^{-1}\abs{m}^{s-\frac 32}y^{\frac 32}K_{s-\frac 32}(2\pi \abs{m}y). 
\end{align*}
In the interest of pairing terms corresponding to $\pm m$, notice that for $m > 0$:
\begin{align*}
  &\be(mx)g(m,y,s)+\be(-mx)g(-m,y,s) \\
=&\cos(2\pi mx)(g(m,y,s)+g(-m,y,s))+i\sin(2\pi mx)(g(m,y,s)-g(-m,y,s))\\
  =& \frac{4y^{\frac 12}\pi^sm^{s-\frac 12}}{\Gamma(s)}\left(\cos(2\pi mx)\twomat{K_{s-\frac 12}(2\pi my)}{-iyK_{s-\frac 12}(2\pi my)}{iyK_{s-\frac 12}(2\pi my)}{\frac{(s-1)y}{\pi m}K_{s-\frac 32}(2\pi my)}+\right.\\
  &\left.\sin(2\pi mx)\left(\frac{1-2s}{2\pi m}K_{s-\frac 12}(2\pi my)+yK_{s+\frac 12}(2\pi my)\right)\twomat 0110\right)
\end{align*}

Finally, to evaluate $g(0,y)$ it remains to observe that since $\tfrac{r}{(r^2+s^2)^s}$ is an odd function of $r$, $\int_{-\infty}^\infty \frac{r}{(r^2+y^2)^s}dr=0$. Therefore we find that
\begin{align*}
  g(0,y,s) &= y^s\int_{-\infty}^\infty \stwomat{1}{r-iy}{r+iy}{r^2+y^2}\tfrac{1}{(r^2+y^2)^{s}}dr\\
         &=y^s\int_{-\infty}^\infty \stwomat{1}{-iy}{iy}{r^2+y^2}\tfrac{1}{(r^2+y^2)^{s}}dr\\
  &=y^s\twomat{\pi^{\frac 12}\frac{\Gamma(s-\frac 12)}{\Gamma(s)}y^{1-2s}}{-iy\pi^{\frac 12}\frac{\Gamma(s-\frac 12)}{\Gamma(s)}y^{1-2s}}{iy\pi^{\frac 12}\frac{\Gamma(s-\frac 12)}{\Gamma(s)}y^{1-2s}}{\pi^{\frac 12}\frac{\Gamma(s-\frac 32)}{\Gamma(s-1)}y^{3-2s}}
\end{align*}
so that
\[
  g(0,y,s) = \frac{\pi^{\frac 12}y^{1-s}\Gamma(s-\frac 12)}{\Gamma(s)}\twomat{1}{-iy}{iy}{\frac{2s-2}{2s-3}y^{2}}
\]

Finally recall that the Fourier expansion for $E(\tau,s+1)$ is
\begin{align*}
  E(\tau,s+1) =& y^{s+1}+\frac{\pi^{2s+1}\Gamma(-s)\zeta(-2s)}{\Gamma(s+1)\zeta(2s+2)}y^{-s}+\\
  &\frac{4\pi^{s+1}y^{\frac 12}}{\Gamma(s+1)\zeta(2s+2)}\sum_{m=1}^\infty m^{-s-\frac 12}\sigma_{2s+1}(m)\cos(2\pi m x)K_{s+\frac 12}(2\pi m y).
\end{align*}
Putting these computations together allows us to prove the following:
\begin{thm}
  \label{t:incmain}
  Let $\rho$ denote the inclusion representation of $\Gamma$, let $L = \stwomat {n}{(2\pi i )^{-1}}0{n}$ for $n \in \ZZ$, and set $H(\tau,s) \df H(\rho,L,I,\tau,s)$. Then the Eisenstein metric $H(\tau,s)$ has a Fourier expansion of the form
  \[H(\tau,s) = \twomat 10{-\tau}1\left(\sum_{m\geq 0} H_m(\tau,s)\right)\twomat 1{-\bar\tau}01\]
  where 
\begin{align*}
  H_0(\tau,s) =& y^se^{-4\pi ny}\twomat{1}{0}{0}{1}+\\
  &\sum_{k\geq 0}\frac{(-4\pi n)^k}{k!}\frac{\pi^{2s+2k+1}\Gamma(-s-k)\zeta(-2s-2k)}{\Gamma(s+k+1)\zeta(2s+2k+2)}y^{-s-k-1}\twomat{1}{0}{0}{0}+\\
 &\sum_{k\geq 0}\frac{(-4\pi n)^k}{k!}\frac{\pi^{\frac 12}y^{1-s-k}\Gamma(s+k-\frac 12)\zeta(2s+2k-3)}{\Gamma(s+k)\zeta(2s+2k-2)}\twomat{1}{-iy}{iy}{\frac{2s+2k-2}{2s+2k-3}y^2}
\end{align*}
and for $m\geq 1$,
\begin{align*}
 & H_m(\tau,s) =4\pi^s\sum_{k\geq 0}\frac{(-4\pi^2 n)^k}{k!}\left(\frac{\pi\sigma_{2s+2k+1}(m)\cos(2\pi m x)K_{s+k+\frac 12}(2\pi m y)}{m^{s+k+\frac 12}\Gamma(s+k+1)\zeta(2s+2k+2)y^{\frac 12}}\twomat{1}{0}{0}{0} + \right.\\
  &\frac{y^{\frac 12}m^{s+k-\frac 12}\sigma_{3-2s-2k}(m)}{\Gamma(s+k)\zeta(2s+2k-2)}\cos(2\pi mx)\twomat{K_{s+k-\frac 12}(2\pi my)}{-iyK_{s+k-\frac 12}(2\pi my)}{iyK_{s+k-\frac 12}(2\pi my)}{\frac{(s+k-1)y}{\pi m}K_{s+k-\frac 32}(2\pi my)}+\\
  &\left.\frac{y^{\frac 12}m^{s+k-\frac 12}\sigma_{3-2s-2k}(m)}{\Gamma(s+k)\zeta(2s+2k-2)}\sin(2\pi mx)\left(\frac{1-2s-2k}{2\pi m}K_{s+k-\frac 12}(2\pi my)+yK_{s+k+\frac 12}(2\pi my)\right)\twomat 0110\right)
\end{align*}
Moreover, $H(\tau,s)$ admits meromorphic continuation to the region $\Re(s) > \tfrac 32$, and its only pole in this region is simple and located at $s=2$. This pole comes from the constant term $H_0(\tau,s)$, and the residue at $s=2$ is a tame harmonic metric for the inclusion representation:
\[
\Res_{s=2}H(\tau,s) = \frac{3}{2\pi y}\twomat{1}{-x}{-x}{x^2+y^2} = \frac{3}{2\pi}K(\tau).
\]
In particular, the residue does not depend on the choice of exponent matrix $L$.
\end{thm}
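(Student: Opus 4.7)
The plan is to assemble the Fourier expansion directly from the running computation of this section, then read off the meromorphic continuation and residue from the explicit formulas.

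First I would collect the pieces already produced. Equation \eqref{eq:inceis3} displays $H(\tau,s)$ as the conjugate (by $\stwomat 10{-\tau}1$ on the left and $\stwomat 1{-\bar\tau}01$ on the right) of a sum of three blocks: a main-diagonal term $y^s e^{-4\pi ny}\stwomat 0001$, a sum built from $E(\tau,s+k+1)$, and a Fourier sum assembled from $g(m,y,s+k)$ against the Ramanujan sum evaluation. Substituting the Fourier expansion of $E(\tau,s+k+1)$ recorded just before the theorem, the closed form for $g(0,y,s+k)$, and the $\pm m$-pairing of $\be(\pm mx)g(\pm m, y, s+k)$ computed above from \eqref{eq:iwaniec} and the integration by parts for the anti-diagonal entries, produces the stated expressions for $H_0$ and $H_m$. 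The identity prefactor in $H_0$ appears once one observes that the constant term $y^{s+k+1}$ of $E(\tau,s+k+1)$, after multiplication by $y^{-1}$ and summation in $k$, contributes $y^s e^{-4\pi ny}\stwomat 1000$, which combines with $y^s e^{-4\pi ny}\stwomat 0001$ to give $y^s e^{-4\pi ny}I_2$.

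Next I would establish meromorphic continuation to $\Re(s) > 3/2$ and locate the pole. For each $m \geq 1$, the Bessel functions $K_\nu(2\pi my)$ are entire in $\nu$, the divisor sums $\sigma_{2s+2k+1}(m)$ and $\sigma_{3-2s-2k}(m)$ are entire in $s$, and the denominators $\Gamma(s+k)$, $\Gamma(s+k+1)$, $\zeta(2s+2k\pm 2)$ are nonvanishing for $\Re(s) > 3/2$ because $\zeta$ has no zeros in $\Re(s) \geq 1$. Together with the exponential decay of $K_\nu(2\pi my)$ in $m$ this gives absolute convergence of $\sum_{m\geq 1}H_m$ uniformly on compacta, so every pole must come from $H_0$. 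Within $H_0$, the first summand is entire; the second summand is holomorphic on $\Re(s) > 3/2$ because the pole of $E(\tau,s+k+1)$ at $s+k+1=1$ sits at $s=-k\leq 0$, and using the functional equation to rewrite its constant Fourier coefficient as $\sqrt\pi\,\Gamma(s+k+\tfrac12)\zeta(2s+2k+1)/(\Gamma(s+k+1)\zeta(2s+2k+2))$ makes holomorphy manifest; and the third summand carries $\zeta(2s+2k-3)$, whose simple pole at $2s+2k = 4$ lies in the region only when $k=0$, giving a single simple pole at $s=2$.

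Finally I would evaluate the residue using only the $k=0$ slice of the third summand. With $\Res_{s=2}\zeta(2s-3) = \tfrac 12$, $\Gamma(\tfrac 32) = \tfrac{\sqrt\pi}{2}$, $\Gamma(2) = 1$, and $\zeta(2) = \tfrac{\pi^2}{6}$, the interior contribution collapses to $\tfrac{3}{2\pi y}\stwomat{1}{-iy}{iy}{2y^2}$, and conjugating by $\stwomat 10{-\tau}1$ and $\stwomat 1{-\bar\tau}01$ with $\tau = x+iy$ turns this into $\tfrac{3}{2\pi y}\stwomat{1}{-x}{-x}{x^2+y^2} = \tfrac{3}{2\pi}K(\tau)$. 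Since only $k=0$ contributes and $(-4\pi n)^0 = 1$, the residue is independent of the choice of exponent $n$. Harmonicity and tameness then follow immediately from the known properties of the totally geodesic metric $K(\tau)$ recorded in \eqref{eq:geodesic} and the surrounding discussion. The main obstacle will be bookkeeping, not conceptual: I would have to execute the $\pm m$ pairing without sign errors in the anti-diagonal (where integration by parts produces both $K_{s+k-1/2}$ and $K_{s+k+1/2}$ contributions), and verify that no hidden pole at $s=2$ arises from the second summand of $H_0$ via an uncancelled $\Gamma(-s)\zeta(-2s)$ singularity — both points handled by careful use of the functional equation of $\zeta$.
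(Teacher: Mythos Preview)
Your proposal is correct and follows essentially the same route as the paper: substitute the computed Fourier transforms $g(m,y,s)$ and the expansion of $E(\tau,s+k+1)$ into equation \eqref{eq:inceis3}, analyze each block of $H_0$ to isolate the unique simple pole at $s=2$ arising from the $k=0$ factor $\zeta(2s-3)$, and evaluate the residue by the same constants. The one point you gloss over that the paper addresses explicitly is the convergence in $k$ of each $H_m(\tau,s)$: the exponential decay of $K_\nu(2\pi my)$ in $m$ does not by itself control the sum over $k$, since $K_{s+k-\frac12}(2\pi my)$ grows like $\tfrac12\Gamma(s+k-\tfrac12)(\pi my)^{-s-k+\frac12}$ for large $k$ (equation \eqref{eq:bessel}), and it is the cancellation of this growth against the $\Gamma(s+k)$ in the denominator that makes the $k$-series converge.
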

\begin{proof}
  The computation of the Fourier coefficients is accomplished by substituting our expressions for the Fourier transforms $g(m,y,s)$ into equation \eqref{eq:inceis3} and simplifying. For the meromorphic continutation, recall that $\Gamma(s/2)\zeta(s)$ has two simple poles, at $s=0$ and $s=1$, but it is otherwise holomorphic. Further, it is nonvanishing in a neighbourhood of $s=1$ and for $\Re(s) \geq 1$.

  The first term in the expression for $H_0(\tau,s)$ is holomorphic in $s$. For each summand in the second line of the expression for $H_0(\tau,s)$ consider the ratio
\[
  \frac{\Gamma(-s-k)\zeta(-2s-2k)}{\Gamma(s+k+1)\zeta(2s+2k+2)}
\]
which is $O(1)$ as a function of $k$. If $\Re(s) > \frac 32$, then $\Re(-2s-2k) < -3-2k \leq -3$. Therefore the numerator of the lined formula above is holomorphic in this region. Likewise, $\Re(2s+2k+2) > 5+2k$, so that the denominator is holomorphic and nonvanishing in this region. Therefore the second line in the description of $H_0(\tau,s)$ converges to a holomorphic function when $\Re(s) > \frac 32$.

For the final set of terms in $H_0(\tau,s)$ we consider the expressions
\[
\frac{\Gamma(s+k-\frac 12)\zeta(2s+2k-3)}{\Gamma(s+k)\zeta(2s+2k-2)}
\]
Here if $\Re(s) > \tfrac 32$ then $\Re(2s+2k-3) > 2k > 0$, so that the only possible pole can come from solutions to $2s+2k-3=1$, which is $s=2-k$. In the region $\Re(s)>\tfrac 32$ this can only occur if $k=0$, in which case a pole occurs at $s=2$. For the denominators we have that $\Re(2s+2k-2) > 1+2k > 1$, so that the denominator is holomorphic and nonvanishing. It follows that when $\Re(s) > \tfrac 32$, the constant term $H_0(\tau,s)$ is holomorphic save for a simple pole arising from the $k=0$ term in the last sum of its expression in Theorem \ref{t:incmain}.

The higher Fourier coefficients $H_m(\tau,s)$ are holomorphic in the region $\Re(s) > \tfrac 32$, and the proof is similar to the constant term. A new feature is that one must estimate sums such as
\[\sum_{k\geq 0}\frac{(4\pi^2n)^k\sigma_{2s+2k+1}(m)K_{s+k+\frac 12}(2\pi my)}{m^kk!\Gamma(s+k+1)\zeta(2s+2k+2)}.\]
When $\abs{\nu}$ is large relative to $x$, one can estimate $K_\nu(x)$ via the first few terms of its Taylor expansion (see Appendix B above (B.35) of \cite{Iwaniec}). More precisely, in the tail of the sum where $\abs{2\pi m y} \ll 1+\abs{s+k+\frac 12}^{1/2}$, one can approximate
\begin{equation}
  \label{eq:bessel}
  K_{s+k+\frac 12}(2\pi m y) \approx\tfrac{1}{2}\Gamma(s+k+\tfrac 12)(\pi m y)^{-s-k-\frac 12}.
\end{equation}
In this way one can show that for large $k$, the Bessel terms are mollified by the $\Gamma$-terms in the denominators. Standard and more elementary estimates for the remaining factors appearing in $H_m(\tau,s)$ then allow one to deduce sufficiently fast convergence to show that these sums indeed yield a holomorphic function in the region $\Re(s) > \tfrac 32$.

It remains to compute the residue at $s=2$, and by the preceding analysis we have:
\begin{align*}
  \Res_{s=2} H(\tau,s) =& \twomat 10{-\tau}1\Res_{s=2}H_0(\tau,s)\twomat{1}{-\bar\tau}01\\
  =&\twomat 10{-\tau}1\Res_{s=2}\frac{\pi^{\frac 12}y^{1-s}\Gamma(s-\frac 12)\zeta(2s-3)}{\Gamma(s)\zeta(2s-2)}\twomat{1}{-iy}{iy}{\frac{2s-2}{2s-3}y^2}\twomat{1}{-\bar\tau}01\\
  =&\frac{\pi^{\frac 12}\Gamma(\frac 32)\Res_{s=2}\zeta(2s-3)}{y\Gamma(2)\zeta(2)}\twomat 10{-\tau}1\twomat{1}{-iy}{iy}{2y^2}\twomat{1}{-\bar\tau}01\\
  =&\frac{3}{2\pi y}\twomat{1}{-x}{-x}{x^2+y^2}
\end{align*}
This concludes the proof of Theorem \ref{t:incmain}.
\end{proof}

\section{Unitary monodromy at the cusp}
\label{s:unitary}
Now let $\rho$ be a representation with $\rho(T)$ unitary. Then we can write $\rho(T) = e^{2\pi i L}$ where $L$ is Hermitian and real. In fact, we may and shall suppose that $\rho(T)$ and $L$ are both diagonal.
\begin{lem}
  Suppose that $\rho(T)$ and $L$ are both diagonal. If $h\in\Pos(\rho)$ then $h$ satisfies
  \[
  \be(-Lz)h\be(Lz)= h
\]
for all $z\in\CC$.
\end{lem}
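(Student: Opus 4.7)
The plan is to work in a simultaneous diagonal basis for $\rho(T)$ and $L$, reduce the claim to an entry-wise vanishing, and then extract that vanishing from the natural $T$-invariance of $h$. Unitarity of $\rho(T) = \be(L)$ together with the hypothesis that $L$ is Hermitian and diagonal forces $L$ to be real diagonal, say $L = \diag(\lambda_1, \ldots, \lambda_d)$ with $\lambda_j \in \RR$, so that $\be(\pm Lz) = \diag(e^{\pm 2\pi i \lambda_j z})$. A one-line entry-wise computation then yields
\[
  (\be(-Lz)\, h\, \be(Lz))_{jk} = e^{2\pi i(\lambda_k - \lambda_j)z}\, h_{jk},
\]
so the asserted identity $\be(-Lz) h \be(Lz) = h$ for all $z \in \CC$ is equivalent to the block-diagonal condition $h_{jk} = 0$ whenever $\lambda_j \neq \lambda_k$, i.e.\ to $h$ commuting with $L$.

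To deduce this block-diagonality from $h \in \Pos(\rho)$, I would invoke the $T$-invariance $\rho(T)^t h \overline{\rho(T)} = h$ operative in the unitary cuspidal setting of this section. In the diagonal basis this reads $e^{2\pi i(\lambda_j - \lambda_k)}\, h_{jk} = h_{jk}$, which forces $h_{jk} = 0$ whenever $\lambda_j - \lambda_k \notin \ZZ$. Normalizing the exponent so that its eigenvalues lie in $[0,1)$, a choice always available modulo the integer ambiguity in $L$, then upgrades the condition $\lambda_j - \lambda_k \in \ZZ$ to $\lambda_j = \lambda_k$, giving the required vanishing and completing the argument.

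The main obstacle I see is justifying the $T$-invariance $\rho(T)^t h \overline{\rho(T)} = h$ directly from Definition \ref{d:pos}, which as literally stated imposes only $\rho(-I)$-invariance. In the unitary cuspidal context of this section this additional $T$-invariance appears to be the natural compatibility needed for the cuspidal profile $h(\tau) = \be(-L\tau)^t h \overline{\be(-L\tau)}$ to interact correctly with the Eisenstein construction, and so it should be read as tacitly in force here — either as an implicit refinement of $\Pos(\rho)$ or as a condition extracted from the well-posedness of $H(\rho,L,h,\tau,s)$. Once that point is clarified, the entry-wise argument above finishes the proof in a single line.
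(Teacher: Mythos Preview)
Your approach is essentially the same as the paper's, recast entry-wise rather than in the language of commutant algebras. The paper argues that $h$ lies in the commutant of $\rho(T)=\be(L)$, identifies that commutant with the block-diagonal Levi attached to the eigenspaces of $L$, and observes that the commutant of $\be(Lz)$ can only be larger; your computation $(\be(-Lz)h\be(Lz))_{jk}=e^{2\pi i(\lambda_k-\lambda_j)z}h_{jk}$ is the coordinate expression of exactly this.

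Your diagnosis of the obstacle is accurate and worth noting: the paper's proof simply asserts ``the lemma holds for $z=1$ by the hypothesis $h\in\Pos(\rho)$'', i.e.\ it takes $T$-invariance of $h$ for granted without deriving it from Definition~\ref{d:pos}, which as written only imposes $\rho(-I)$-invariance. So the gap you flag is shared by the paper, not peculiar to your argument; the paper is tacitly reading $\Pos(\rho)$ in this section as also incorporating $\rho(T)^t h\overline{\rho(T)}=h$ (as it does again in Section~\ref{s:indecomposable}, where it describes $\Pos(\psi)$ via $\Herm_2^{\psi(T)}$). Likewise, your normalization of the exponents to $[0,1)$ is the explicit form of an assumption the paper's proof uses implicitly when it identifies the commutant of $\rho(T)$ with the Levi attached to the \emph{distinct eigenvalues of $L$}: without that normalization two distinct eigenvalues of $L$ could differ by an integer and collapse in $\rho(T)$, enlarging the commutant of $\rho(T)$ beyond that of $L$. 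In short, your argument is correct to the same extent the paper's is, and you have been more transparent about where the hypotheses are really being used.
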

\begin{proof}
The lemma holds for $z=1$ by the hypothesis $h \in \Pos(\rho)$. Without loss of generality we can suppose that the distinct eigenvalues of $L$ are $r_1,\ldots, r_m$ each with corresponding multiplicity $\mu_j$ for $j=1,\ldots, m$. The commutator algebra of $\rho(T) = \be(L)$ consists of the Levi subalgebra of block diagonal matrices with blocks of size $(\mu_1,\ldots ,\mu_m)$. The commutant of $\be(Lz)$ for $z\in\CC$ can only possibly increase in size (e.g. if $z=0$). This proves the lemma.
\end{proof}

Write $\tau=x+iy$, so that for $h\in\Pos(\rho)$, the preceding lemma implies that
\[
  h(\tau) = e^{-2\pi i L(x+iy)}h e^{2\pi i L(x-iy)} = e^{4\pi Ly}h.
\]
By abuse of notation below, $a,b$ denote integers chosen so that the corresponding matrix is unimodular; in particular, $b$ changes in the last line of the following computation, but the result is independent of this choice, which justifies our abuse of notation. With this point made, we compute:
\begin{align*}
  &  H(h,\tau,s)\\
  &= e^{4\pi Ly}y^sh + \sum_{c\geq 1}\sum_{\gcd(c,d)=1} \rho\stwomat abcd^t e^{4\pi L \frac{y}{\abs{c\tau+d}^2}}h\overline{\rho\stwomat abcd}\frac{y^s}{\abs{c\tau+d}^{2s}}\\
  &= e^{4\pi Ly}y^sh + \sum_{c\geq 1}\sum_{\substack{d=1\\\gcd(c,d)=1}}^c\sum_{r\in\ZZ} \rho\stwomat abc{cr+d}^te^{4\pi L \frac{y}{\abs{c(\tau+r)+d}^2}}h\overline{\rho\stwomat abc{cr+d}}\frac{y^s}{\abs{c(\tau+r)+d}^{2s}}\\
  &=e^{4\pi Ly}y^sh + \sum_{c\geq 1}\sum_{\substack{d=1\\\gcd(c,d)=1}}^c\sum_{r\in\ZZ}\sum_{k\geq 0} \frac{(4\pi)^k}{k!}\rho\stwomat abc{cr+d}^t L^kh\overline{\rho\stwomat abc{cr+d}}\frac{y^{s+k}}{\abs{c(\tau+r)+d}^{2(s+k)}}\\
  &=e^{4\pi Ly}y^sh + y^s\sum_{k\geq 0} \frac{(4\pi y)^k}{k!}\sum_{c\geq 1}\frac{1}{c^{2(s+k)}}\sum_{\substack{d=1\\\gcd(c,d)=1}}^c\sum_{r\in\ZZ}\frac{\rho(T^r)\rho\stwomat abc{d}^t L^kh\overline{\rho\stwomat abc{d}}\rho(T^{-r})}{\abs{\tau+r+\frac{d}{c}}^{2(s+k)}}
\end{align*}
where in the last line we have used that $\rho(T)$ is diagonal to drop the transpose. Likewise, since $L$ is real, this means $\rho(T)$ is unitary and we have used the identity $\overline{\rho(T^r)} = \rho(T^{-r})$.

Let $G(\tau)$ denote the sum over $r$ above. Notice that $G(\tau+1) = \rho(T)^{-1}G(\tau)\rho(T)$ so that if we write
\[
  \tilde G(\tau) = \be(Lx)G(\tau)\be(-Lx)
\]
then $\tilde G(\tau+1)=\tilde G(\tau)$. For simplicity, to study $\tilde G$ we momentarily write
\begin{align*}
  M =& \rho\stwomat abc{d}^t L^kh\overline{\rho\stwomat abc{d}},\\
  L =& \diag(e_1,\ldots, e_n),
\end{align*}
for real exponents $e_j$. Then the $(i,j)$-entry of $\tilde G$ is:
\begin{align*}
  \tilde G_{ij} =& M_{ij}\sum_{r\in\ZZ}\be((e_i-e_j)(x+r))\frac{1}{((x+r+\frac dc)^2+y^2)^{s+k}}\\
=& \be\left((e_j-e_i)\frac dc\right) M_{ij}\sum_{r\in\ZZ}\be((e_i-e_j)(X+r))\frac{1}{((X+r)^2+y^2)^{s+k}}
\end{align*}
where $X = x+\frac dc$. We can evaluate this last sum using Poisson summation: with the Fourier transform
\begin{align*}
  f(u) =& \int_{-\infty}^\infty \be((e_i-e_j)(X+r))\frac{1}{((X+r)^2+y^2)^{s+k}}\be(-ur)dr\\
  =&\be(Xu)\int_{-\infty}^\infty \be((e_i-e_j-u)r)\frac{1}{(r^2+y^2)^{s+k}}dr
\end{align*}
Poisson summation gives
\[
  \tilde G_{ij} =\be\left((e_j-e_i)\frac dc\right) M_{ij}\sum_{u\in\ZZ} f(u).
\]

Formulas (3.18) and (3.19) of \cite{Iwaniec} yield expressions
\begin{equation}
  \label{eq:ft}
f(u) = \begin{cases}
  \pi^{\frac 12}\be((x+\frac dc)u)\frac{\Gamma(s+k-\frac 12)}{\Gamma(s+k)}y^{1-2(s+k)} & e_i-e_j=u,\\
  2\frac{\pi^{s+k}\be((x+\frac dc)u)\abs{u+e_j-e_i}^{s+k-\frac 12}}{y^{s+k-\frac 12}\Gamma(s+k)}K_{s+k-\frac 12}(2\pi \abs{u+e_j-e_i}y)&e_i-e_j \neq u.
\end{cases}
\end{equation}
Thus, if
\[
  z_{ij} = \begin{cases}
    1 & e_i-e_j \in \ZZ,\\
    0 & e_i-e_j \not \in \ZZ,
  \end{cases}
  \]
then we deduce that
  \begin{align*}
  \tilde G_{ij} =& \pi^{\frac 12}\be((e_i-e_j)x)\frac{\Gamma(s+k-\frac 12)}{\Gamma(s+k)}y^{1-2(s+k)}M_{ij}z_{ij}+\\
  &\frac{2\pi^{s+k}\be\left((e_j-e_i)\frac dc\right) M_{ij}}{y^{s+k-\frac 12}\Gamma(s+k)}\sum_{\substack{u\in\ZZ\\ u\neq e_i-e_j}} \be((x+\tfrac dc)u)\abs{u+e_j-e_i}^{s+k-\frac 12}K_{s+k-\frac 12}(2\pi \abs{u+e_j-e_i}y)
\end{align*}
Notice that
\[
  G_{ij} = (\be(-Lx)\tilde G \be(Lx))_{ij} =\be((e_j-e_i)x)\tilde G_{ij}
\]
Therefore, putting all of this together, we have shown that
\begin{align*}
  H_{ij} =& e^{4\pi e_iy}y^sh_{ij}+\pi^{\frac 12}y^{1-s}\sum_{k\geq 0} \frac{(4\pi)^k}{k!}\sum_{c\geq 1}\frac{1}{c^{2(s+k)}}\sum_{\substack{d=1\\\gcd(c,d)=1}}^c\frac{\Gamma(s+k-\frac 12)}{y^k\Gamma(s+k)}M_{ij}z_{ij}+\\
          &y^{\frac 12}\sum_{k\geq 0} \frac{(4\pi)^k}{k!}\sum_{c\geq 1}\frac{1}{c^{2(s+k)}}\sum_{\substack{d=1\\\gcd(c,d)=1}}^c\frac{2\pi^{s+k} M_{ij}}{\Gamma(s+k)}\times \\
  &\sum_{\substack{u\in\ZZ\\ u\neq e_i-e_j}} \be((x+\tfrac dc)(u+e_j-e_i))\abs{u+e_j-e_i}^{s+k-\frac 12}K_{s+k-\frac 12}(2\pi \abs{u+e_j-e_i}y)
\end{align*}

Rearranging terms yields the following Fourier expansion for the $(i,j)$-entry of $H(h,\tau,s)$:
\begin{align}
  \label{eq:fourier1}
\nonumber  H_{ij} =&  e^{4\pi e_iy}y^sh_{ij}+\pi^{\frac 12}y^{1-s}\frac{\Gamma(s-\frac 12)}{\Gamma(s)}\sum_{c\geq 1}\frac{1}{c^{2s}}\left(\sum_{\substack{d=1\\\gcd(c,d)=1}}^c\rho\stwomat abcd^t{}_1F_1(s-\tfrac 12,s,\tfrac{4\pi L}{c^2y})h\overline{\rho\stwomat abcd}\right)_{ij}z_{ij}+\\
                   &2y^{\frac 12}\pi^s\sum_{\substack{u\in\ZZ\\ u\neq e_i-e_j}}\abs{u+e_j-e_i}^{s-\frac 12}\be((u+e_j-e_i)x)\sum_{c\geq 1}\frac{1}{c^{2s}}\sum_{\substack{d=1\\\gcd(c,d)=1}}^c\be((u+e_j-e_i)\tfrac dc)\\
  \nonumber & \left(\rho\stwomat abcd^t \sum_{k\geq 0}\left(\frac{K_{s+k-\frac 12}(2\pi \abs{u+e_j-e_i}y)}{k!\Gamma(s+k)}\left(\frac{4\pi^2\abs{u+e_j-e_i}L}{c^2}\right)^k\right)h\overline{\rho\stwomat abcd}\right)_{ij}
\end{align}
Note that the convergence of the sum on $k$ is deduced as in the equation \eqref{eq:bessel} in the proof of Theorem \ref{t:incmain}, which uses the expansion of $K_{s+k-\frac 12}$ near zero when $k$ is large.

The first line in equation \eqref{eq:fourier1} gives the constant term of $H(\tau,s)$, which in particular is independent of $x$, unlike for the inclusion representation in Section \ref{s:inclusion}. Inspired by the discussion in Section \ref{s:inclusion}, it is natural to consider the righmost pole of this constant term (if such a pole exists!) and its corresponding residue. Restrict to the case where $e_i-e_j\not \in \ZZ$ unless $i=j$, which is a familiar condition from the study of ordinary differential equations. This condition implies that $z_{ij} = \delta_{ij}$. Since the term $e^{4\pi Ly}y^sh$ is entire as a function of $s$, we are interested in the diagonal terms of the matrix valued function:
\[
C(\tau,s)=\pi^{\frac 12}y^{1-s}\frac{\Gamma(s-\frac 12)}{\Gamma(s)}\sum_{c\geq 1}\frac{1}{c^{2s}}\left(\sum_{\substack{d=1\\\gcd(c,d)=1}}^c\rho\stwomat abcd^t{}_1F_1(s-\tfrac 12,s,\tfrac{4\pi L}{c^2y})h\overline{\rho\stwomat abcd}\right)
\]
Unfortunately the Kloosterman sums appearing above are somewhat unwieldy to handle via a direct approach in general. However, basic estimates show that the rightmost pole arises from the constant term of ${}_1F_1(s-\tfrac 12,s,\tfrac{4\pi L}{c^2y})$ in its Taylor expansion in $4\pi L/c^2y$, so that one is really interested in the analytic properties of the diagonal terms of
\[
  C_0(\tau,s) = \pi^{\frac 12}y^{1-s}\frac{\Gamma(s-\frac 12)}{\Gamma(s)}\sum_{c\geq 1}\frac{1}{c^{2s}}\left(\sum_{\substack{d=1\\\gcd(c,d)=1}}^c\rho\stwomat abcd^th\overline{\rho\stwomat abcd}\right)
\]

This expression is a little more manageable. For example, suppose that $h=I_d$ and $\rho$ is unitary, so that $\rho(\gamma)^t\overline{\rho(\gamma)}=I_d$. In this case
\[
  C_0(\tau,s) = \pi^{\frac 12}y^{1-s}\frac{\Gamma(s-\frac 12)}{\Gamma(s)}\sum_{c\geq 1}\frac{\phi(c)}{c^{2s}}I_d=\pi^{\frac 12}y^{1-s}\frac{\Gamma(s-\frac 12)\zeta(2s-1)}{\Gamma(s)\zeta(2s)}I_d
\]
It follows that the rightmost pole occurs at $s=1$, and the residue is a multiple of the identity matrix. Therefore, this shows that when $\rho$ is unitary, the rightmost pole of $H(\tau,s)$ for $h=I_d$ occurs at $s=1$ and the residue is a multiple of the Petersson inner product, which is a harmonic metric for trivial reasons. It is unclear how general this phenomenon is. In the next section we discuss an example where $\rho(T)$ is unitary but $\rho$ is not unitarizable, to indicate some of the difficulties of analyzing expressions like $C_0(\tau,s)$ in more general circumstances.

\begin{rmk}
  Since $L$ is diagonal, the two matrix sums
  \begin{align*}
&\sum_{\substack{d=1\\\gcd(c,d)=1}}^c\rho\stwomat abcd^th\overline{\rho\stwomat abcd},&& \sum_{\substack{d=1\\\gcd(c,d)=1}}^c\be(-L\tfrac dc)\rho\stwomat abcd^th\overline{\rho\stwomat abcd}\be(L\tfrac dc)
  \end{align*}
  have the same diagonal entries. The matrix on the right, however, only depends on $d \mod{c}$ (this observation uses that $L$ is real), whereas the matrix expression on the left undergoes a monodromy transformation after changing the values of $d$ mod $c$. Thus, the right sum involving $\be(\pm L\tfrac dc)$ could be used in the definition of $C_0(\tau,s)$, giving a more natural expression. This would also give an expression for the constent term of $H(\tau,s)$ that is more uniform with the higher Fourier coefficients, which already incoroporate such exponential factors. In the following section we consistently work with Kloosterman sums that include these additional exponential factors.
\end{rmk}

\section{An indecomposable family}
\label{s:indecomposable}
Let $\chi$ be the character of the modular form $\eta^4$, where $\eta$ is the Dedekind eta function. For $z \in \uhp$ and $\alpha \in \CC$, define a $\CC$-valued group cocycle on $\Gamma$ by the integral
\[
  \kappa(\gamma) = \int_{z}^{\gamma z}\alpha \eta^4(\tau)d\tau.
\]
This satisfies the cocycle identity
\[
  \kappa(\gamma_1\gamma_2) = \chi(\gamma_1)\kappa(\gamma_2)+\kappa(\gamma_1)
\]
for all $\gamma_1,\gamma_2 \in \Gamma$. Changing $z$ adjusts $\kappa$ by a coboundary. We can use this cocycle to define a representation of $\Gamma$ that contains a nontrivial subrepresentation, but which is not completely reducible into a direct sum of irreducible representations:
\[
  \rho(\gamma) = \twomat{\chi(\gamma)}{\kappa(\gamma)}{0}{1}.
\]
This defines a family of representations in the parameters $z$ and $\alpha$ defining $\kappa$. If $\zeta = e^{2\pi i/6}$, then
\begin{align*}
\rho(T) &= \twomat{\zeta}{\kappa(T)}01, & \rho(S) &= \twomat{-1}{\kappa(S)}01, & \rho(-1) &= \twomat 1001.
\end{align*}
Observe that $\rho(T)^6=I$, so that $\rho(T)$ is diagonalizable. However, it is not possible to diagonalize $\rho(T)$ while keeping $\rho(S)$ diagonal. From this one sees that $\rho$ contains a nontrivial subrepresentation, but it is not completely reducible for generic values of $\alpha$ and $z$.

Consider the representation of $\Gamma$ on the real vector space $\Herm_2$ defined by
\[
  M\cdot \gamma = \rho(\gamma)^tM\overline{\rho(\gamma)}.
\]
Let $U=\Herm_2^{\Gamma}$ denote the invariants for this action. A simple computation shows that generically $U$ is spanned by $\stwomat 0001$. In particular, $U$ does not contain any positive definite matrices, so that there does not exist a harmonic metric for $\rho$ that is constant as a function of $\tau$ for generic choices of $\kappa$.

To apply the material of Section \ref{s:unitary} in our study of metrics for this representation, it will be necessary to change basis so that the $T$-matrix is diagonal. If we set $\zeta = e^{2\pi i/6}$, $P=\stwomat 0{\zeta}{1}{-\zeta\kappa(T)}$, and $\psi = P\rho P^{-1}$, then one checks that
\begin{align*}
\psi(T) &= \twomat 100 \zeta, &\psi(S) &= \twomat 10{(1-\zeta)\kappa(S)-2\kappa(T)}{-1},
\end{align*}
and more generally
\[
  \psi(\gamma) = \twomat{1}{0}{(1-\zeta)\kappa(\gamma)+(\chi(\gamma)-1)\kappa(T)}{\chi(\gamma)}.
\]
The identity
\begin{equation}
\label{eq:kappaderivative}
  \frac{d\kappa}{dz}(\gamma) = (\chi(\gamma)-1)\alpha\eta^4(z)
\end{equation}
implies that $\frac{d\psi}{dz}=0$, so that the change of basis has made $\psi$ independent of $z$. It is thus a one-parameter family of representations determined by the choice of $\alpha \in \CC$ in the definition of $\kappa$. The specializations of this family are not completely reducible unless $\alpha=0$.

Turning to the associated Eisenstein metrics, we take for our exponent matrix $L = \stwomat 000{\frac 16}$. Observe that since $\psi(T)$ is diagonal with distinct eigenvalues, $\Herm_2^{\psi(T)}$ consists of real diagonal matrices. Therefore $\Pos(\psi)$ consists of positive real diagonal matrices, and since the choice of $h \in \Pos(\psi)$ only really depends on $h$ up to scaling, we can write
\[h = \twomat{1}00{A}\]
for $A \in \RR_{>0}$. With these choices of parameters we write $H(\tau,s) = H(\psi,L,h,\tau,s)$, whose Fourier expansion is given by equation \eqref{eq:fourier1} on page \pageref{eq:fourier1}.

As usual, much of the difficulty in studying the Fourier expansion of $H(\tau,s)$ lies in understanding the Kloosterman sums\footnote{For simplicity we focus on the constant term $u=0$ of the Fourier expansion, otherwise we should incorporate an additional exponential factor in the Kloosterman sum} and their associated generating series:
\begin{align*}
  \Kl(c) \df& \sum_{\substack{d=1\\\gcd(c,d)=1}}^c\be(-L\tfrac{d}{c})\psi\twomat abcd^t\twomat 100A\overline{\psi\twomat abcd}\be(L\tfrac{d}{c}),\\
  D(s) \df& \sum_{c\geq 1}\frac{\Kl(c)}{c^s}.
\end{align*}
These families of Kloosterman sums admit a second-order Taylor expansion centered on the reducible specializations of $\psi$ satisfying $\kappa(S)=2\zeta\kappa(T)$ (which we have seen is equivalent to the condition $\alpha=\kappa=0$ in the definition of $\kappa$):
\begin{prop}
  \label{p:taylor}
  There exist sequences $a_c\in\ZZ_{\geq 0}$ and $b_c\in \ZZ[e^{2\pi i/6c}]$, independent of $\kappa$, such that
    \[
\Kl(c) = \twomat{\kappa(S)-2\zeta\kappa(T)}{0}{0}{1}\twomat{a_c}{b_c}{\overline{b_c}}{0}\twomat{\overline{\kappa(S)-2\zeta\kappa(T)}}{0}{0}{1}A+\phi(c)\twomat 100A.
\]
for all $c\geq 1$.
\end{prop}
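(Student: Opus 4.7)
The plan is to isolate the universal (i.e., $\kappa$-independent) data in the lower-left entry $f(\gamma) := (1-\zeta)\kappa(\gamma) + (\chi(\gamma)-1)\kappa(T)$ of $\psi(\gamma)$, and to recognize that the correct deformation parameter is $\delta := \kappa(S) - 2\zeta\kappa(T)$. First I would verify directly from the cocycle identity for $\kappa$ that $f$ is itself a $\chi$-twisted $1$-cocycle, i.e.~$f(\gamma_1\gamma_2) = \chi(\gamma_1)f(\gamma_2) + f(\gamma_1)$. A short calculation then yields $f(T) = (1-\zeta)\kappa(T) + (\zeta-1)\kappa(T) = 0$, while the identity $(1-\zeta)\cdot 2\zeta = 2\zeta - 2\zeta^2 = 2$ (which follows from $\zeta^2 = \zeta - 1$ for $\zeta = e^{i\pi/3}$) gives
\[
f(S) = (1-\zeta)\kappa(S) - 2\kappa(T) = (1-\zeta)(\kappa(S) - 2\zeta\kappa(T)) = (1-\zeta)\delta.
\]
Since $\Gamma$ is generated modulo $\pm I$ by $S$ and $T$ and $f(-I)=0$ is automatic, the cocycle identity forces $f = \delta\,\tilde f$, where $\tilde f$ is the universal $\chi$-twisted cocycle determined by $\tilde f(T) = 0$ and $\tilde f(S) = 1-\zeta$. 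An easy induction on word length, using that $\chi$ takes values in $\langle\zeta\rangle \subset \ZZ[\zeta]$, shows $\tilde f(\gamma) \in \ZZ[\zeta]$ for every $\gamma \in \Gamma$.

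With this factorization in hand, the rest is a direct matrix computation. Writing $\psi(\gamma) = \stwomat{1}{0}{f(\gamma)}{\chi(\gamma)}$ and using $\chi(\gamma)\overline{\chi(\gamma)} = 1$, one finds
\[
\psi(\gamma)^t h \overline{\psi(\gamma)} = \twomat{1 + A\abs{f(\gamma)}^2}{A f(\gamma)\overline{\chi(\gamma)}}{A \chi(\gamma)\overline{f(\gamma)}}{A}.
\]
Conjugation by $\be(L\tfrac{d}{c}) = \diag(1,\omega)$, where $\omega := e^{2\pi i d/(6c)}$, preserves the diagonal and multiplies the $(1,2)$-entry by $\omega$ and the $(2,1)$-entry by $\bar\omega$. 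Substituting $f = \delta\tilde f$ and summing over $d$ coprime to $c$ gives
\begin{align*}
\Kl(c)_{11} &= \phi(c) + A\abs{\delta}^2 a_c, & \Kl(c)_{12} &= A\delta b_c,\\
\Kl(c)_{21} &= A\bar\delta\,\overline{b_c}, & \Kl(c)_{22} &= A\phi(c),
\end{align*}
where
\[
a_c := \sum_{\substack{d=1\\ \gcd(c,d)=1}}^c \abs{\tilde f(\gamma)}^2,\qquad b_c := \sum_{\substack{d=1\\ \gcd(c,d)=1}}^c \tilde f(\gamma)\,\overline{\chi(\gamma)}\,\omega.
\]
Both $a_c$ and $b_c$ are manifestly independent of $\kappa$. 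For $x = a+b\zeta \in \ZZ[\zeta]$ one has $\abs{x}^2 = a^2 + ab + b^2 \in \ZZ_{\geq 0}$, whence $a_c \in \ZZ_{\geq 0}$; and the summands defining $b_c$ visibly lie in $\ZZ[e^{2\pi i/(6c)}]$, hence so does $b_c$. Rewriting the resulting matrix in the form
\[
\phi(c)\twomat{1}{0}{0}{A} + A\twomat{\delta}{0}{0}{1}\twomat{a_c}{b_c}{\overline{b_c}}{0}\twomat{\bar\delta}{0}{0}{1}
\]
recovers the displayed identity.

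The main obstacle is conceptual rather than computational: one must recognize that the correct deformation parameter is the $\ZZ[\zeta]$-linear combination $\kappa(S) - 2\zeta\kappa(T)$, rather than the more apparent expression $f(S) = (1-\zeta)\kappa(S) - 2\kappa(T)$ that arises directly from writing $\psi(S)$. The algebraic identity $2\zeta(1-\zeta) = 2$ is precisely what lets the factor $(1-\zeta)$ be absorbed into $\delta$, centering the expansion on the reducible locus. Once this observation is in place the remainder of the argument is a mechanical reorganization, relying only on the cocycle structure of $f$ and on how diagonal twists act on a Kloosterman-type sum.
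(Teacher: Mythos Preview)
Your proof is correct and follows essentially the same approach as the paper: both arguments factor the lower-left entry of $\psi(\gamma)$ as $\lambda\cdot a(\gamma)$ with $\lambda=\kappa(S)-2\zeta\kappa(T)$ and $a(\gamma)\in\ZZ[\zeta]$ independent of $\kappa$, then carry out the identical matrix computation for the Kloosterman summand. Your treatment is somewhat more explicit---you make the $\chi$-twisted cocycle structure of $f$ and the identity $2\zeta(1-\zeta)=2$ visible, whereas the paper argues the $\kappa$-independence of $a(\gamma)$ via the word expansion in $S,T$ together with differentiation in $z$ and homogeneity in $\alpha$---but the substance is the same.
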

\begin{proof}
  Write $\gamma = \stwomat abcd \in \Gamma$, and observe that the lower-triangular form of $\psi(T)$ and $\psi(S)$ show that after writing $\gamma$ as a word in $S$ and $T$, we have
  \[
  \psi(\gamma) = \twomat 10{\lambda a(\gamma)}{\chi(\gamma)}
  \]
  where $\lambda =  \kappa(S)-2\zeta\kappa(T)$ and $a(\gamma) \in \ZZ[\zeta]$. Further, $a(\gamma)$ is independent of $\kappa$, as it only depends on how one writes $\gamma$ as a word in $S$ and $T$. Equivalently, this independence can be seen by writing $a(\gamma)$ as a ratio of linear combinations of values of $\kappa$. Then $a(\gamma)$ is seen to be independent of $z$ by differentiation, via equation \eqref{eq:kappaderivative}. The possible dependence of $a(\gamma)$ on $\alpha$ cancels in the ratio defining $a(\gamma)$, so that it is indeed entirely independent of the choice of $\kappa$.
  
  If we write $\omega = e^{2\pi i /c}$, then the general term in the sum defining $\Kl(c)$ takes the form
  \begin{align*}
    &\be(-L\tfrac{d}{c})\psi\twomat abcd^t\twomat 100A\overline{\psi\twomat abcd}\be(L\tfrac{d}{c})\\
    =&\twomat{1}{0}{0}{w^{-d}}\twomat 1{\lambda a(\gamma)}0{\chi(\gamma)}\twomat100A\twomat 10{\overline{\lambda a(\gamma)}}{\overline{\chi(\gamma)}}\twomat{1}{0}{0}{\omega^{d}}\\
    =&\twomat{1}{A\lambda a(\gamma)}{0}{A\omega^{-d}\chi(\gamma)}\twomat{1}{0}{\overline{\lambda a(\gamma)}}{\overline{\chi(\gamma)}\omega^d}\\
    =&\twomat{1+A\abs{\lambda a(\gamma)}^2}{A\lambda a(\gamma)\overline{\chi(\gamma)}\omega^d}{A\overline{\lambda a(\gamma)}\chi(\gamma)\omega^{-d}}{A}
  \end{align*}
  Therefore, from this expression we see that the Proposition holds with
  \begin{align*}
    a_c &= \sum_{\substack{d=1\\\gcd(c,d)=1}}^c \abs{a(\gamma)}^2, & b_c&= \sum_{\substack{d=1\\\gcd(c,d)=1}}^ca(\gamma)\overline{\chi(\gamma)}\omega^d.
  \end{align*}
\end{proof}

Values of $a_c$ and $b_c$ from Proposition \ref{p:taylor} are listed in Table \ref{t:values} on page \pageref{t:values}, and plots of the values $a_c/\phi(c)$ and $\abs{b_c}/\phi(c)$ are contained in Figure \ref{f:plots} on page \pageref{f:plots}. Polynomial growth estimates can be obtained for both $a_c$ and $b_c$ using Eichler length estimates as in Corollarly 3.5 of \cite{KnoppMason4}, though establishing an exact abscissa of convergence for $D(s)$, and the computation of the corresponding residue, would require a more detailed analysis. Completion of such analysis would likely enable one to establish analytic continuation of  $H(\tau,s)$ around its rightmost pole and compute the corresponding residue. The analysis of Section \ref{s:unitary} shows that it is really the diagonal terms of $D(s)$ that intervene in this residue computation, so that it is the sequence $a_c$ that is most important for this analysis. We leave this computation, and whether the resulting computation produces a harmonic metric for these non-unitary representations, as an open question for future investigation.

\begin{center}
  \begin{table}
    \small
    \renewcommand{\arraystretch}{1.2}
    \begin{tabular}{|lll|lll|lll|}
      \hline
      $c$ & $a_c$ & $\abs{b_c}$ & $c$ & $a_c$ & $\abs{b_c}$ & $c$ & $a_c$ & $\abs{b_c}$ \\
      \hline
$1$ & $1$ & $1.00000000000000$ & $41$ & $268$ & $64.6571107195962$ & $81$ & $408$ & $85.1656851140978$ \\
$2$ & $3$ & $1.73205080756888$ & $42$ & $72$ & $20.0687840963522$ & $82$ & $252$ & $56.5973235931712$ \\
$3$ & $8$ & $3.75877048314363$ & $43$ & $306$ & $73.0570345250152$ & $83$ & $586$ & $117.037087585102$ \\
$4$ & $6$ & $2.44948974278318$ & $44$ & $156$ & $35.5537710214148$ & $84$ & $192$ & $34.9899263168499$ \\
$5$ & $16$ & $5.89024980807019$ & $45$ & $120$ & $27.1680532695269$ & $85$ & $448$ & $96.6076285872280$ \\
$6$ & $0$ & $0.000000000000000$ & $46$ & $162$ & $38.2563920108131$ & $86$ & $222$ & $45.5669196048687$ \\
$7$ & $30$ & $10.6212055278435$ & $47$ & $334$ & $76.0785405490705$ & $87$ & $344$ & $63.6184752024529$ \\
$8$ & $24$ & $8.74368977583278$ & $48$ & $96$ & $22.2698637680974$ & $88$ & $192$ & $40.2417450446403$ \\
$9$ & $24$ & $8.46211760746388$ & $49$ & $234$ & $49.5345990909473$ & $89$ & $700$ & $142.765142290188$ \\
$10$ & $24$ & $8.50885617465906$ & $50$ & $132$ & $31.3390062462300$ & $90$ & $96$ & $16.7365006938530$ \\
$11$ & $34$ & $8.06373872165313$ & $51$ & $128$ & $26.8051369832377$ & $91$ & $432$ & $90.9540326194120$ \\
$12$ & $24$ & $6.90183932530257$ & $52$ & $120$ & $28.2286705109218$ & $92$ & $276$ & $55.9590294683383$ \\
$13$ & $60$ & $17.6625147140923$ & $53$ & $304$ & $58.8064777885113$ & $93$ & $384$ & $78.9992542510420$ \\
$14$ & $18$ & $4.51004461341033$ & $54$ & $48$ & $13.3209735097928$ & $94$ & $234$ & $46.1408533394733$ \\
$15$ & $32$ & $10.9523800561763$ & $55$ & $328$ & $72.1117079402388$ & $95$ & $504$ & $101.308022987726$ \\
$16$ & $36$ & $10.5908400750848$ & $56$ & $168$ & $38.8741356168516$ & $96$ & $216$ & $45.3977923880827$ \\
$17$ & $100$ & $28.8451266799323$ & $57$ & $240$ & $54.5982902795300$ & $97$ & $672$ & $121.240085779400$ \\
$18$ & $48$ & $11.6357787307730$ & $58$ & $192$ & $43.9988770726292$ & $98$ & $318$ & $71.4261760057502$ \\
$19$ & $90$ & $22.6680074518870$ & $59$ & $370$ & $86.8139150720411$ & $99$ & $480$ & $105.327246473209$ \\
$20$ & $24$ & $8.62256402186157$ & $60$ & $120$ & $23.3476708044070$ & $100$ & $360$ & $69.3186466692249$ \\
$21$ & $72$ & $21.5754050648131$ & $61$ & $396$ & $88.8088449383063$ & $101$ & $784$ & $163.899681424127$ \\
$22$ & $54$ & $12.8238722222672$ & $62$ & $186$ & $37.1923900968198$ & $102$ & $240$ & $45.3783981921531$ \\
$23$ & $118$ & $31.2713176779699$ & $63$ & $192$ & $39.4977755536762$ & $103$ & $702$ & $141.046737147506$ \\
$24$ & $24$ & $4.68062625874532$ & $64$ & $252$ & $65.9323483074285$ & $104$ & $288$ & $54.0158720618784$ \\
$25$ & $92$ & $23.0405284170969$ & $65$ & $336$ & $78.7891012360799$ & $105$ & $432$ & $80.9717796948430$ \\
$26$ & $84$ & $23.6003748220743$ & $66$ & $168$ & $36.1781508818000$ & $106$ & $360$ & $67.3821144626558$ \\
$27$ & $120$ & $32.1615253767701$ & $67$ & $426$ & $98.8288262449906$ & $107$ & $706$ & $139.080897319654$ \\
$28$ & $60$ & $16.5743373851450$ & $68$ & $240$ & $52.2333340434603$ & $108$ & $264$ & $49.1275927376387$ \\
$29$ & $184$ & $47.9243293316807$ & $69$ & $272$ & $54.9017052415375$ & $109$ & $780$ & $158.620245756362$ \\
$30$ & $72$ & $16.1604866258962$ & $70$ & $192$ & $35.1304032277211$ & $110$ & $144$ & $27.2025670023814$ \\
$31$ & $198$ & $52.2249030090458$ & $71$ & $358$ & $71.4349749764530$ & $111$ & $576$ & $130.288464328520$ \\
$32$ & $84$ & $18.4576382654540$ & $72$ & $168$ & $41.9944276907149$ & $112$ & $408$ & $77.5058407028028$ \\
$33$ & $104$ & $23.1126468409856$ & $73$ & $456$ & $98.5821717496100$ & $113$ & $772$ & $153.976310942955$ \\
$34$ & $84$ & $21.3802759723862$ & $74$ & $252$ & $53.8089795671028$ & $114$ & $336$ & $75.1039529939772$ \\
$35$ & $96$ & $23.2788573806613$ & $75$ & $376$ & $82.6327687606048$ & $115$ & $688$ & $134.930035438987$ \\
$36$ & $72$ & $18.0074606478609$ & $76$ & $276$ & $57.9389308181890$ & $116$ & $408$ & $79.5199162732558$ \\
$37$ & $180$ & $37.9144960695286$ & $77$ & $396$ & $92.2328292717801$ & $117$ & $432$ & $87.0726283552659$ \\
$38$ & $102$ & $25.8385873231311$ & $78$ & $216$ & $42.1228698589956$ & $118$ & $390$ & $65.5233353417242$ \\
$39$ & $144$ & $33.1989215841068$ & $79$ & $534$ & $115.964839305033$ & $119$ & $672$ & $123.549686340431$ \\
$40$ & $72$ & $12.0023106035007$ & $80$ & $216$ & $45.2715362547579$ & $120$ & $168$ & $40.9277382709894$ \\
      \hline
    \end{tabular}
    \caption{Values of the sequences $a_c$ and $\abs{b_c}$ for small values of $c$.}
    \label{t:values}
  \end{table}
\end{center}

\begin{center}
  \begin{figure}
\begin{tabular}{cc}
\includegraphics[scale=0.5]{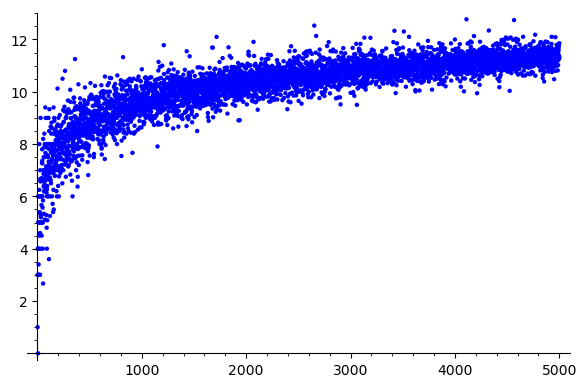}
&\includegraphics[scale=0.5]{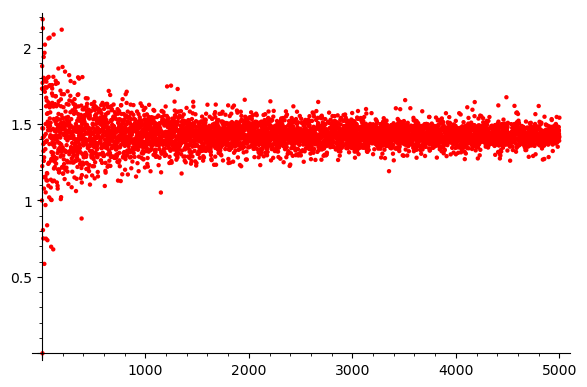}
\end{tabular}
\caption{Values of the sequences $\frac{a_c}{\phi(c)}$ and $\frac{\abs{b_c}}{\phi(c)}$ in blue and red, respectively.}
    \label{f:plots}
\end{figure}
\end{center}

\bibliographystyle{plain}

\end{document}